\newtheorem{rem}{Remark}[section]
\newtheorem{thm}{Theorem}[section]
\newtheorem{lem}{Lemma}[section]
\newtheorem{propo}{Proposition}[section]
\newtheorem{hypothesis}[thm]{Hypothesis}
\numberwithin{equation}{section}
\newcommand*{\R}{\mathbb{R}} 
\newcommand*{\N}{\mathbb{N}} 
\newcommand*{\F}{\mathcal{F}} 
\newcommand*{\PP}{\mathbb{P}}
\newcommand*{\E}{\mathbb{E}}
\title{Concentration inequalities for Stochastic Differential Equations with additive fractional noise}
\author{Maylis Varvenne\thanks{Institut de Math\'ematiques de Toulouse, Universit\'e Paul Sabatier, 118 route de Narbonne, 31062 Toulouse, France. E-mail: maylis.varvenne@math.univ-toulouse.fr}}
\date\today}
\begin{document}

\maketitle
\begin{abstract}
In this paper, we establish concentration inequalities both for functionals of the whole solution on an interval $[0,T]$ of an additive SDE driven by a fractional Brownian motion with Hurst parameter $H\in(0,1)$ and for functionals of discrete-time observations of this process. Then, we apply this general result to specific functionals related to discrete and continuous-time occupation measures of the process.
\end{abstract}

\bigskip

\noindent\textit{Keywords:} Concentration inequalities; Fractional Brownian Motion; Occupation measures; Stochastic Differential Equations.

\section{Introduction}

In this article, we consider the solution $(Y_t)_{t\geq0}$ of the following $\R^d$-valued Stochastic Differential Equation (SDE) with additive noise:
\begin{equation}\label{eq:sde_intro}
Y_t=x+\int_0^t  b(Y_s) {\rm d}s +  \sigma B_t.
\end{equation}
with $B$ a $d$-dimensional fractional Brownian motion (fBm) with Hurst parameter $H\in(0,1)$.
We are interested in questions of long-time concentration phenomenon of the law of the solution $Y$. A well known way to overcome this type of problem is to prove $L^1$-transportation inequalities. Let us precise what it means.
Let $(E,d)$ be a metric space equipped with a $\sigma$-field $\mathcal{B}$ such that the distance $d$ is $\mathcal{B}\otimes\mathcal{B}$-measurable. Given $p\geqslant1$ and two probability measures $\mu$ and $\nu$ on $E$, the Wasserstein distance is defined by 
$$\mathcal{W}_p(\mu,\nu)=\inf\left(\int\int d(x,y)^p{\rm d}\pi(x,y)\right),$$
where the infimum runs over all the probability measures $\pi$ on $E\times E$ with marginals $\mu$ and $\nu$. The entropy of $\nu$ with respect to $\mu$ is defined by
$$\mathbf{H}(\nu|\mu)=\left\{\begin{array}{ll}
\int\log\left(\frac{{\rm d}\nu}{{\rm d}\mu}\right){\rm d}\nu, & \text{ if }\nu\ll\mu,\\
+\infty & \text{ otherwise.}
\end{array}\right.$$
Then, we say that $\mu$ satisfies an $L^p$-transportation inequality with constant $C\geqslant0$ (noted $\mu\in T_p(C)$) if for any probability measure $\nu$,
\begin{equation}\label{eq:transport_ineq}
\mathcal{W}_p(\mu,\nu)\leqslant\sqrt{2C\mathbf{H}(\nu|\mu)}.
\end{equation}
The concentration of measure is intrinsically linked to the above inequality when $p=1$. This fact was first emphasized by K.Marton \cite{marton1996measure, marton1996bounding}, M.Talagrand \cite{talagrand1996transportation}, Bobkov and Götze \cite{bobkov1999exponential} and amply investigated by M.Ledoux \cite{ledoux2001concentration, ledoux2002concentration}.  Indeed, it can be shown (see \cite{ledoux2001concentration} for a detailed proof) that \eqref{eq:transport_ineq} for $p=1$ is actually equivalent to the following: for any $\mu$-integrable $\alpha$-Lipschitz function $F$ (real valued) we have for all $\lambda\in\R$,
\begin{equation}\label{eq:exp_moment_equiv_transport}
\E\left(\exp\left(\lambda\left(F(X)-\E[F(X)]\right)\right)\right)\leqslant\exp\left(C\alpha^2\frac{\lambda^2}{2}\right)
\end{equation}
with $\mathcal{L}(X)=\mu$.
This upper bound naturally leads to concentration inequalities through the classical Markov inequality. For several years, $L^1$ (and $L^2$ since $T_2(C)$ implies $T_1(C)$) transportation inequalities have then been widely studied and in particular for diffusion processes (see for instance \cite{djellout2004transportation, wu2004talagrand, gourcy2006logarithmic}). \\
For SDE's driven by more general Gaussian processes, S.Riedel established transportation cost inequalities in \cite{riedel2014talagrand} using Rough Path theory. However, his results do not give long-time concentration, which is our focus here.\\
In the setting of fractional noise, T.Guendouzi \cite{guendouzi2012transportation} and B.Saussereau \cite{saussereau2012transportation} have studied transportation inequalities with different metrics in the case where $H\in(1/2,1)$. In particular, B.Saussereau gave an important contribution: he proved $T_1(C)$ and $T_2(C)$ for the law of $(Y_t)_{t\in[0,T]}$ in various settings and he got a result of large-time asymptotics in the case of a contractive drift.
Our first motivation to this work was to get equivalent results in a discrete-time context, i.e. for $\mathcal{L}((Y_{k\Delta})_{1\leqslant k\leqslant n})$ for a given step $\Delta>0$ and then long-time concentration inequalities for the occupation measure, i.e. for $\frac{1}{n}\sum_{k=1}^nf(Y_{k\Delta})$ (where $f$ is a general Lipschitz function real valued). 
Indeed, in a statistical framework we only have access to discrete-time observations of the process $Y$ and such a result could be meaningful in such context. To the best of our knowledge, this type of result is unknown in the fractional setting.\\ 
We first tried to adapt the methods used in \cite{saussereau2012transportation} in several ways as for example: find a distance such that $(y_t)_{t\in[0,T]}\mapsto (y_{k\Delta})_{1\leqslant k\leqslant n}$ is Lipschitz and prove $T_1(C)$ with this metric. But the constants obtained in the $L^1$-transportation inequalities were not sharp enough, so that we couldn't deduce large-time asymptotic as B.Saussereau. \\
In \cite{djellout2004transportation}, H.Djellout, A.Guillin and L.Wu explored transportation inequalities in the diffusive case and both in a continuous and discrete-time setting. In particular, for the discrete-time case, they used a kind of tensorization of the $L^1$ transportation inequality but the Markovian nature of the process was essential. However, they prove $T_1(C)$ through its equivalent formulation \eqref{eq:exp_moment_equiv_transport} and to this end, they apply a decomposition of the functional in \eqref{eq:exp_moment_equiv_transport} into a sum of martingale increments, namely:
$$F(X)-\E[F(X)]=\sum_{k=1}^n\E[F(X)|\mathcal{F}_k]-\E[F(X)|\mathcal{F}_{k-1}]$$ 
with $X=(Y_{k\Delta})_{1\leqslant k\leqslant n}$ and $Y$ is the solution of \eqref{eq:sde_intro} when $B$ is the classical Brownian motion. \\
This decomposition has inspired the approach described in this paper: instead of proving an $L^1$ transportation inequality \eqref{eq:transport_ineq}, we prove its equivalent formulation \eqref{eq:exp_moment_equiv_transport} by using a similar decomposition and the series expansion of the exponential function. 
Through this strategy, we prove several results under an assumption of contractivity on the drift term $b$ in \eqref{eq:sde_intro}. First, in a discrete-time setting, we work in the space $(\R^d)^n$ endowed with the $L^1$ metric and we show that for any $\alpha$-Lipschitz functional $F:(\R^d)^n\to\R$ and for any $\lambda>0$,
$$\E\left(\exp\left(\lambda\left(F(X)-\E[F(X)]\right)\right)\right)\leqslant \exp\left(C\alpha^2\lambda^2n^{2H\vee1}\right)$$
with $X=(Y_{k\Delta})_{1\leqslant k\leqslant n}$. In a similar way, we consider the space of continuous functions $\mathcal{C}([0,T],\R^d)$ endowed with the $L^1$ metric and we prove that for any $\alpha$-Lipschitz functional $\tilde{F}:\mathcal{C}([0,T],\R^d)\to\R$ and for any $\lambda>0$,
$$\E\left(\exp\left(\lambda\left(\tilde{F}(X)-\E[\tilde{F}(X)]\right)\right)\right)\leqslant \exp\left(C\alpha^2\lambda^2T^{2H\vee1}\right)$$
with $X=(Y_{t})_{t\in[0,T]}$. From these inequalities, we deduce some general concentration inequalities and large-time asymptotics for occupation measures. Let us note that 
we have no restriction on the Hurst parameter $H$ and we retrieve the results given by B.Saussereau for $H\in(1/2,1)$ in a continuous setting and also the result given in \cite{djellout2004transportation} for $H=1/2$, namely for diffusion.\\

The paper is organised as follows. In the next section, we describe the assumptions on the drift term and we state the general theorem about concentration, namely Theorem \ref{thm:main_result}. Then, in Subsection \ref{subsection:occupation_measure}, we apply this result to specific functionals related to the occupation measures (both in a discrete-time and in a continuous-time framework). Section \ref{section:sketch_of_proof} outlines our strategy of proof which is fulfilled in Sections \ref{section:sum_martingale} and \ref{section:moments}. 

\section{Setting and main results}

\subsection{Notations}

The usual scalar product on $\R^d$ is denoted by $\langle~,~\rangle$ and $|~.~|$ stands either for the Euclidean norm on
$\R^d$ or the absolute value on $\R$. We denote by $\mathcal{M}_d(\R)$ the space of real matrices of size $d\times d$. For a given $n\in\N^*$ and $(x,y)\in\left(\R^d\right)^n\times\left(\R^d\right)^n$, we denote by $d_n$ the following $L^1$-distance: 
\begin{equation}\label{eq:def_dn}
d_n(x,y):=\sum_{k=1}^n|x_i-y_i|.
\end{equation} 
Analogeously, for a given $T>0$ and $(x,y)\in\mathcal{C}\left([0,T],\R^d\right)\times\mathcal{C}\left([0,T],\R^d\right)$, we denote by $d_T$ the classical $L^1$-distance: 
\begin{equation}\label{eq:def_dT}
d_T(x,y):=\int_0^T|x_t-y_t|{\rm d}t.
\end{equation}
Let $F: (E,d_E)\to(E',d_{E'})$ be a Lipschiz function between two metric spaces, we denote by $$\|F\|_{\rm Lip}:=\sup\limits_{x\neq y}\frac{d_{E'}(F(x),F(y))}{d_E(x,y)}$$ its Lipschitz norm.\\
Let $w,\tilde{w}\in\mathcal{C}(\R_+,\R^d)$, let $a,b,c\in\R_+$ such that $a<b<c$. Then, we define 
\begin{equation}
w_{[a,b]}\sqcup \tilde{w}_{[b,c]} (t):=\left\{\begin{array}{ll}w(t) & \text{ if } a\leqslant t\leqslant b\\
\tilde{w}(t)  & \text{ if } b<t\leqslant c.\\

\end{array}\right.
\end{equation}
\subsection{Assumptions and general result}

Let $B$ be a $d$-dimensional fractional Brownian motion (fBm) with Hurst parameter $H\in\left(0,1\right)$ defined on $(\Omega,\mathcal{F},\PP)$ and transferred from a $d$-dimensional Brownian motion $W$ through the Volterra representation (see e.g. \cite{decreusefond1999stochastic, carmona2003stochastic})
\begin{equation}\label{eq:volterra_rep}
\forall t\in\R_+,\quad B_t=\int_{0}^t K^{}_H(t,s){\rm d}W_s,
\end{equation}
with
\begin{equation}\label{eq:kernel_K_H}
K^{}_H(t,s):=c_H\left[\frac{t^{H-\frac{1}{2}}}{s^{H-\frac{1}{2}}}(t-s)^{H-\frac1{2}}-\left(H-\frac1{2}\right)\int_s^t\frac{u^{H-\frac{3}{2}}}{s^{H-\frac{1}{2}}}(u-s)^{H-\frac1{2}}{\rm d}u\right].
\end{equation}
In the sequel, the distribution of $W$ will be denoted by $\PP_W$.\\

We consider the following  $\R^d$-valued stochastic differential equation driven by $B$:
\begin{equation}\label{eq:sde}
Y_t=x+\int_0^t  b(Y_s) {\rm d}s +  \sigma B_t , \qquad t\geqslant0.
\end{equation}
Here $x \in \mathbb{R}^d$ is a given initial condition, $B$ is the aformentioned fractional Brownian motion and $\sigma\in\mathcal{M}_d(\R)$.\\

We are working under the following assumption :
\begin{hypothesis}\label{hyp:b-coercive} We have $b \in \mathcal{C} (\mathbb{R}^d; \mathbb{R}^d)$ and 
there exist constants $\alpha, L>0$ such that:

\noindent
\emph{(i)} For every $x,y\in\R^{d}$,
\begin{equation*}
\langle  b(x)-b(y), \, x-y\rangle \leq  - \alpha |x-y|^2.
\end{equation*}
\noindent
\emph{(ii)} For every $x,y\in\R^{d}$,
\begin{equation*} 
|b(x)-b(y)|\leq L|x-y|.
\end{equation*}
\end{hypothesis}
\begin{rem} $\rhd$ Since $b$ is Lipschitz and $\sigma$ is constant, $Y$ in \eqref{eq:sde} denotes the unique strong solution.\\
$\rhd$ This contractivity assumption on the drift term is quite usual to get long-time concentration results (see \cite{djellout2004transportation,saussereau2012transportation} for instance). At this stage, a more general framework seems elusive.
\end{rem}

Let $T>0$ and $n\in\N^*$.
Let $F:\left((\R^d)^n,d_n\right)\to(\R,|\cdot|)$ and $\tilde{F}:\left(\mathcal{C}\left([0,T],\R^d\right),d_T\right)\to(\R,|\cdot|)$ be two Lipschitz functions and set  
\begin{equation}\label{eq:def_FY_FtildeY}
F_Y:=F(Y_{t_1},\dots,Y_{t_n})\quad\text{ and }\quad\tilde{F}_Y=\tilde{F}((Y_t)_{t\in[0,T]})
\end{equation}
with $0<\Delta=t_1<\dots<t_n$ and $t_{k+1}-t_k=\Delta$ for a given $\Delta>0$.\\

We are now in position to state our results for general functionals $F$ and $\tilde{F}$. First, we prove a result on the exponential moments of $F_Y$ and $\tilde{F}_Y$ which is crucial to get Theorem \ref{thm:main_result}.

\begin{propo}\label{prop:exp_moment_F_Ftilde}
Let $H\in(0,1)$ and $\Delta>0$. Let $n\in\N^*$, $T\geqslant1$ and $d_n,d_T$ be the metrics defined respectively by \eqref{eq:def_dn} and \eqref{eq:def_dT}. Then,
\begin{itemize}
\item[(i)] there exist $C_{H,\Delta,\sigma}>0$ such that for all Lipschitz functions $F:\left((\R^d)^n,d_n\right)\to(\R,|\cdot|)$ and for all $\lambda>0$,
\begin{equation}\label{eq:moment_F}
\E\left[\exp\left(\lambda(F_Y-\E[F_Y])\right)\right]\leqslant \exp\left(C_{H,\Delta,\sigma}\|F\|^2_{\rm Lip}\lambda^2n^{2H\vee1}\right).
\end{equation}
\item[(ii)]there exist $\tilde{C}_{H,\sigma}>0$ such that for all Lipschitz functions $\tilde{F}:\left(\mathcal{C}\left([0,T],\R^d\right),d_T\right)\to(\R,|\cdot|)$ and for all $\lambda>0$,
\begin{equation}\label{eq:moment_Ftilde}
\E\left[\exp\left(\lambda(\tilde{F}_Y-\E[\tilde{F}_Y])\right)\right]\leqslant \exp\left(\tilde{C}_{H,\sigma}\|\tilde{F}\|^2_{\rm Lip}\lambda^2T^{2H\vee1}\right).
\end{equation}
\end{itemize}
\end{propo}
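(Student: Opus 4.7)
Following the approach advertised in the introduction, I would bypass a direct proof of a $T_1$-inequality and establish \eqref{eq:moment_F} via a martingale decomposition along the filtration of the underlying Brownian motion $W$. Set $\mathcal{F}_k := \sigma(W_s : s \leqslant k\Delta)$ and $M_k := \E[F_Y\mid \mathcal{F}_k] - \E[F_Y \mid \mathcal{F}_{k-1}]$, so that $F_Y - \E[F_Y] = \sum_{k=1}^n M_k$ is a telescoping sum of martingale differences. To obtain a tractable expression for $M_k$, I would introduce $\widetilde W$ an independent copy of $W$ and, for $j \in \{k-1,k\}$, form the hybrid Brownian motion $\widehat W^{(j)} := W_{[0,j\Delta]} \sqcup (W_{j\Delta} + \widetilde W_{\cdot} - \widetilde W_{j\Delta})_{(j\Delta,\infty)}$, together with the associated fBm $\widehat B^{(j)}$ through \eqref{eq:volterra_rep} and the corresponding solution $\widehat Y^{(j)}$ of \eqref{eq:sde}. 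A direct conditioning argument then gives $M_k = \E_{\widetilde W}[F(\widehat Y^{(k)}) - F(\widehat Y^{(k-1)})]$, whence by the Lipschitz hypothesis
$$|M_k| \leq \|F\|_{\rm Lip}\, \E_{\widetilde W}\bigl[d_n(\widehat Y^{(k)}, \widehat Y^{(k-1)})\bigr].$$

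\textbf{Pathwise control via the contractivity hypothesis.} Write $D := \widehat Y^{(k)} - \widehat Y^{(k-1)}$ and $G := \sigma(\widehat B^{(k)} - \widehat B^{(k-1)})$. By construction $G_t \equiv 0$ on $[0,(k-1)\Delta]$, so $D_t\equiv0$ there as well, and for $t > (k-1)\Delta$ the quantity $G_t$ is a centered Gaussian vector expressible as a Wiener integral of $K_H(t,\cdot)$ against $\mathrm d(W - \widetilde W)$ on the window $((k-1)\Delta, k\Delta \wedge t]$. The residual process $Z := D - G$ is absolutely continuous with $\dot Z_t = b(\widehat Y^{(k-1)}_t + D_t) - b(\widehat Y^{(k-1)}_t)$, so Hypothesis \ref{hyp:b-coercive} followed by Young's inequality and $|D|^2 \leq 2|Z|^2 + 2|G|^2$ yields
$$\tfrac{{\rm d}}{{\rm d}t}|Z_t|^2 \leq -\tfrac{\alpha}{2}|Z_t|^2 + C_{\alpha,L}\,|G_t|^2,$$
and hence by Gronwall the deterministic pathwise estimate
$$|D_t|^2 \leq C_{\alpha,L}\int_{(k-1)\Delta}^{t} e^{-\frac{\alpha}{2}(t-s)}|G_s|^2\,{\rm d}s + 2|G_t|^2.$$
This captures the essential role of contractivity: the perturbation introduced by shifting one noise increment decays exponentially in time.

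\textbf{From moments to the exponential moment and the $n^{2H\vee 1}$ scaling.} Because $G$ is Gaussian, the previous inequality and Gaussian hypercontractivity upgrade to $\|D_t\|_{L^p} \leqslant C\sqrt p\, v_k(t)$, with $v_k(t)^2$ controlled by the variance of $G_t$. Summing over the grid, $\|M_k\|_{L^p} \leqslant \|F\|_{\rm Lip}\sqrt p\,\eta_k$ where $\eta_k := \sum_{i=1}^n v_k(t_i)$, i.e.\ each $M_k$ is sub-Gaussian. I would then expand
$$\E\bigl[\exp(\lambda(F_Y-\E[F_Y]))\bigr] = \sum_{m\geqslant 0} \tfrac{\lambda^m}{m!}\,\E\Bigl[\Bigl(\sum_{k=1}^n M_k\Bigr)^{\!m}\,\Bigr],$$
apply the Burkholder--Davis--Gundy inequality (or a direct induction on $m$ exploiting $\E[M_k\mid\mathcal F_{k-1}]=0$) combined with the sub-Gaussian $L^p$-control of $M_k$ to obtain $\E[(\sum_k M_k)^{2p}] \leqslant (Cp)^p \|F\|_{\rm Lip}^{2p}(\sum_k \eta_k^2)^p$, and finally sum the resulting even power series in $\lambda$ to recover the bound $\exp(C\|F\|_{\rm Lip}^2\lambda^2\sum_k \eta_k^2)$. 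The main obstacle, to which Sections \ref{section:sum_martingale}--\ref{section:moments} are devoted, is to show that $\sum_k \eta_k^2 \lesssim n^{2H\vee 1}$: for $H<1/2$ the kernel $K_H$ is short-range and the disjoint-increment independence yields the diffusive linear scaling; for $H \geqslant 1/2$ the kernel has slow decay and the double summation $\sum_{k,i}v_k(t_i)^2$ produces the anomalous factor $n^{2H}$, whose correct power requires a careful interplay between the $K_H(t_i,s)$ kernel bounds and the exponential contractivity factor $e^{-\alpha(t-s)/2}$. Statement (ii) is then obtained by applying (i) to Riemann-type approximations $F_n((y_{t_i}))\to\tilde F(y)$ with mesh $\Delta = T/n$ and passing to the limit, thereby converting $n^{2H\vee 1}$ into $T^{2H\vee 1}$.
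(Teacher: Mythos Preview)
Your overall architecture---martingale decomposition along the filtration of $W$, coupling via an independent copy $\widetilde W$, reduction to a pathwise bound on the difference of two solutions, then sub-Gaussian moment control and summation---matches the paper exactly. The genuine gap is in the specific pathwise estimate.

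By writing $Z=D-G$ and applying Gronwall to $|Z|^2$, you obtain
\[
|D_t|^2 \;\le\; C_{\alpha,L}\int_{(k-1)\Delta}^t e^{-\frac{\alpha}{2}(t-s)}|G_s|^2\,{\rm d}s \;+\; 2|G_t|^2,
\]
so that your $v_k(t)^2$ is governed by $\E|G_t|^2=\int_{(k-1)\Delta}^{k\Delta}K_H(t,s)^2\,{\rm d}s$. But $K_H(t,s)$ does \emph{not} decay in $t$: for $H<1/2$ one has $\E|G_t|^2\asymp (t-k)^{2H-1}$, and for $H>1/2$ it even grows. Carrying this through gives $\eta_k\asymp (n-k)^{H+1/2}$ (up to a power of $k$), hence $\sum_k\eta_k^2$ of order at least $n^{2H+2}$, not $n^{2H\vee1}$. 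The estimate you defer to Sections~\ref{section:sum_martingale}--\ref{section:moments} is proved there for \emph{different} quantities $\psi_{n,k}$, obtained from a sharper bound that your decomposition cannot reach: by splitting $D=Z+G$ and bounding $|D|^2\le 2|Z|^2+2|G|^2$ you discard the cancellation between $Z$ and $G$ that makes $D$ decay.

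The paper instead differentiates $|D_t|^2$ \emph{directly}. For $t>k\Delta$ this produces the term $\dot G_t=\sigma\int_{(k-1)\Delta}^{k\Delta}\partial_tK_H(t,s)\,{\rm d}(W-\widetilde W)_s$, and since $\partial_tK_H(t,s)=c_H(t/s)^{H-1/2}(t-s)^{H-3/2}$ always decays like $(t-s)^{H-3/2}$, Gronwall now yields $|D_t|^2\lesssim \Psi_H(t-k+1,k)$ with $\Psi_H(u,k)$ of order $u^{2H-3}$ (plus $k^{1-2H}u^{4H-4}$ when $H>1/2$). This extra power of $u^{-1}$ relative to your bound is exactly what makes $\psi_{n,k}=\sum_u\sqrt{\Psi_H(u,k)}$ summable (for $H<1/2$) or of order $(n-k)^{H-1/2}$ (for $H>1/2$), and then $\sum_k\psi_{n,k}^2\lesssim n^{2H\vee1}$.

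A secondary point: your route to (ii) via Riemann sums with $\Delta=T/n\to0$ requires tracking how $C_{H,\Delta,\sigma}$ scales with $\Delta$, which the statement of (i) does not provide. The paper avoids this by running the same martingale argument directly for $\tilde F$, with filtration steps at integer times $k=1,\dots,\lceil T\rceil$ and the sum over $i$ replaced by an integral $\int_0^{T-k+1}$.
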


\begin{rem} Let us note that this proposition is actually equivalent to $L^1$-transportation inequalities as mentionned in the introduction. More precisely, item $(i)$ is equivalent to $\mathcal{L}((Y_{t_k})_{1\leq k\leq n})\in~T_1(2C_{H,\Delta,\sigma}n^{2H\vee1})$ for the metric $d_n$ and item $(ii)$ is equivalent to $\mathcal{L}((Y_{t})_{t\in[0,T]})\in T_1(2\tilde{C}_{H,\sigma}T^{2H\vee1})$ for the metric $d_T$.
\end{rem}

From Proposition \ref{prop:exp_moment_F_Ftilde}, we deduce the following concentration inequalities:
\begin{thm}\label{thm:main_result}Let $H\in(0,1)$ and $\Delta>0$. Let $n\in\N^*$, $T\geqslant1$ and $d_n,d_T$ be the metrics defined respectively by \eqref{eq:def_dn} and \eqref{eq:def_dT}. Then,
\begin{itemize}
\item[(i)] there exist $C_{H,\Delta,\sigma}>0$ such that for all Lipschitz functions $F:\left((\R^d)^n,d_n\right)\to(\R,|\cdot|)$ and for all $r\geqslant0$,
\begin{equation}\label{eq:concentration_F}
\PP\left(F_Y-\E[F_Y]>r\right)\leqslant \exp\left(-\frac{r^2}{4C_{H,\Delta,\sigma}\|F\|^2_{\rm Lip}n^{2H\vee1}}\right).
\end{equation}
\item[(ii)]there exist $\tilde{C}_{H,\sigma}>0$ such that for all Lipschitz functions $\tilde{F}:\left(\mathcal{C}\left([0,T],\R^d\right),d_T\right)\to(\R,|\cdot|)$ and for all $r\geqslant0$,
\begin{equation}\label{eq:concentration_Ftilde}
\PP\left(\tilde{F}_Y-\E[\tilde{F}_Y])>r\right)\leqslant \exp\left(-\frac{r^2}{4\tilde{C}_{H,\sigma}\|\tilde{F}\|^2_{\rm Lip}T^{2H\vee1}}\right).
\end{equation}
\end{itemize}
\end{thm}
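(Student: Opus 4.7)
The plan is to deduce the concentration inequalities of Theorem \ref{thm:main_result} from the exponential moment bounds of Proposition \ref{prop:exp_moment_F_Ftilde} by a direct Chernoff--Markov argument, optimizing the free parameter $\lambda$. Since the two items (i) and (ii) have exactly the same structure (only the constant and the role of $n^{2H\vee 1}$ vs.\ $T^{2H\vee 1}$ differ), I would carry out the argument once and then apply it verbatim in both cases.

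First, for item (i), I would fix $r \geqslant 0$, $n \in \N^*$ and a Lipschitz function $F$, and write for any $\lambda > 0$:
\begin{equation*}
\PP\bigl(F_Y - \E[F_Y] > r\bigr)
= \PP\bigl(e^{\lambda(F_Y - \E[F_Y])} > e^{\lambda r}\bigr)
\leqslant e^{-\lambda r}\,\E\bigl[e^{\lambda(F_Y - \E[F_Y])}\bigr]
\end{equation*}
by Markov's inequality. Then I would plug in the bound \eqref{eq:moment_F} of Proposition \ref{prop:exp_moment_F_Ftilde}(i) to get
\begin{equation*}
\PP\bigl(F_Y - \E[F_Y] > r\bigr) \leqslant \exp\!\bigl(C_{H,\Delta,\sigma}\|F\|_{\rm Lip}^2 \lambda^2 n^{2H\vee 1} - \lambda r\bigr).
\end{equation*}

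Next, I would optimize over $\lambda > 0$. The right-hand side is minimized at
\begin{equation*}
\lambda^\star = \frac{r}{2 C_{H,\Delta,\sigma}\|F\|_{\rm Lip}^2 n^{2H\vee 1}} \geqslant 0,
\end{equation*}
yielding the claimed bound \eqref{eq:concentration_F} with the constant $4C_{H,\Delta,\sigma}$ in the denominator (the case $r=0$ or $\|F\|_{\rm Lip}=0$ being trivial, and $\lambda^\star > 0$ admissible in \eqref{eq:moment_F}). The bound for item (ii) follows by the exact same computation, substituting \eqref{eq:moment_Ftilde} in place of \eqref{eq:moment_F}, replacing $n^{2H\vee 1}$ by $T^{2H\vee 1}$, and using the constant $\tilde C_{H,\sigma}$.

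There is essentially no obstacle here: the whole analytical content is already packed into Proposition \ref{prop:exp_moment_F_Ftilde}, and the deduction of Theorem \ref{thm:main_result} is the textbook Herbst-type argument linking subgaussian exponential moments to Gaussian tail bounds. The only minor care needed is that \eqref{eq:moment_F}--\eqref{eq:moment_Ftilde} are stated for $\lambda > 0$, which is exactly the regime used when optimizing for a one-sided deviation $F_Y - \E[F_Y] > r$ with $r \geqslant 0$.
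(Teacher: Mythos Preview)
Your proposal is correct and follows exactly the paper's own approach: apply Markov's inequality, plug in the exponential moment bound from Proposition~\ref{prop:exp_moment_F_Ftilde}, and optimize over $\lambda>0$. The only difference is that you spell out the optimization explicitly, whereas the paper states it in one line.
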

\begin{proof}We use Markov inequality and Proposition \ref{prop:exp_moment_F_Ftilde}. Then, we optimize in $\lambda$ to get the result.
\end{proof}

\begin{rem} $\rhd$ The dependency on the Lipschitz constant of $F$ and $\tilde{F}$ is essential since they may depend on $n$ and $T$. Hence, if they decrease fast than $n^{-2H\vee1}$ and $T^{-2H\vee1}$, we get large time concentration inequalities.\\
$\rhd$ Let us note that this result remains true if the noise process in \eqref{eq:sde} is replaced by the Liouville fractional Brownian $\tilde{B}$ motion which has the following simpler representation: $\tilde{B}_t=\int_0^t (t-s)^{H-1/2}{\rm d}W_s$. The proof follows exactly the same lines.
\end{rem}
In the following subsection, we outline our main application of Theorem \ref{thm:main_result} for which long time concentration holds.
\subsection{Long time concentration inequalities for occupation measures}\label{subsection:occupation_measure}

We now apply our general result to specific functionals to get the following theorem.
\begin{thm} Let $H\in(0,1)$ and $\Delta>0$. Let $n\in\N^*$ and $T\geqslant1$. Then,
\begin{itemize}
\item[(i)] there exist $C_{H,\Delta,\sigma}>0$ such that for all Lipschitz functions $f:\left(\R^d,|\cdot|\right)\to(\R,|\cdot|)$ and for all $r\geqslant0$,
\begin{equation}\label{eq:occ_measure_moment_F}
\PP\left(\frac{1}{n}\sum_{k=1}^nf(Y_{t_k})-\E[f(Y_{t_k})]>r\right)\leqslant \exp\left(-\frac{r^2n^{2-(2H\vee1)}}{4C_{H,\Delta,\sigma}\|f\|^2_{\rm Lip}}\right).
\end{equation}
\item[(ii)]there exist $\tilde{C}_{H,\sigma}>0$ such that for all Lipschitz functions $f:\left(\R^d,|\cdot|\right)\to(\R,|\cdot|)$ and for all $r\geqslant0$,
\begin{equation}\label{eq:occ_measure_moment_Ftilde}
\PP\left(\frac{1}{T}\int_{0}^T(f(Y_t)-\E[f(Y_t)]){\rm d}t>r\right)\leqslant \exp\left(-\frac{r^2T^{2-(2H\vee1)}}{4\tilde{C}_{H,\sigma}\|f\|^2_{\rm Lip}}\right).
\end{equation}
\end{itemize}
\end{thm}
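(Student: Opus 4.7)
The plan is to apply Theorem \ref{thm:main_result} directly, with the two functionals
\[
F(y_1,\ldots,y_n) := \frac{1}{n}\sum_{k=1}^n f(y_k) \quad \text{and}\quad \tilde F(y) := \frac{1}{T}\int_0^T f(y_t)\,{\rm d}t,
\]
for a given Lipschitz $f:\R^d\to\R$. In both cases, the main content of the proof reduces to an elementary estimate on the Lipschitz norm of these functionals for the metrics $d_n$ and $d_T$ respectively.

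First I would bound the Lipschitz norms. For $x,y\in(\R^d)^n$, the triangle inequality and Lipschitz continuity of $f$ give
\[
|F(x)-F(y)| \leqslant \frac{1}{n}\sum_{k=1}^n |f(x_k)-f(y_k)| \leqslant \frac{\|f\|_{\rm Lip}}{n}\sum_{k=1}^n |x_k-y_k| = \frac{\|f\|_{\rm Lip}}{n}\,d_n(x,y),
\]
so $\|F\|_{\rm Lip}\leqslant \|f\|_{\rm Lip}/n$. Likewise, for $x,y\in\mathcal{C}([0,T],\R^d)$,
\[
|\tilde F(x)-\tilde F(y)| \leqslant \frac{1}{T}\int_0^T |f(x_t)-f(y_t)|\,{\rm d}t \leqslant \frac{\|f\|_{\rm Lip}}{T}\,d_T(x,y),
\]
hence $\|\tilde F\|_{\rm Lip}\leqslant \|f\|_{\rm Lip}/T$.

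Now I would simply substitute these Lipschitz bounds into \eqref{eq:concentration_F} and \eqref{eq:concentration_Ftilde}. For item (i), Theorem \ref{thm:main_result} applied to $F$ yields
\[
\PP\!\left(\frac{1}{n}\sum_{k=1}^n f(Y_{t_k})-\E\!\left[\frac{1}{n}\sum_{k=1}^n f(Y_{t_k})\right]>r\right) \leqslant \exp\!\left(-\frac{r^2}{4C_{H,\Delta,\sigma}\,(\|f\|_{\rm Lip}/n)^2\,n^{2H\vee1}}\right),
\]
and a direct simplification of the exponent gives the announced rate $n^{2-(2H\vee1)}$. Item (ii) follows in exactly the same way from \eqref{eq:concentration_Ftilde} applied to $\tilde F$, replacing $n$ by $T$.

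There is no real obstacle here: the heavy lifting is entirely contained in Theorem \ref{thm:main_result}, whose strength was precisely to preserve the explicit dependency on $\|F\|_{\rm Lip}$ so that small Lipschitz constants of the averaging functionals translate into large-time (respectively, large-$n$) concentration. The only point deserving attention is that the rate $n^{2-(2H\vee1)}$ is positive for every $H\in(0,1)$: for $H\leqslant 1/2$ it equals $n$, matching the diffusive result of \cite{djellout2004transportation}, while for $H>1/2$ it equals $n^{2-2H}$, which degrades as $H\to1$ but still gives a genuine long-time concentration, in agreement with \cite{saussereau2012transportation} in the continuous-time case.
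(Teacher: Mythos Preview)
Your proof is correct and follows exactly the same approach as the paper: define the averaging functionals $F$ and $\tilde F$, compute their Lipschitz constants $\|f\|_{\rm Lip}/n$ and $\|f\|_{\rm Lip}/T$ with respect to $d_n$ and $d_T$, and plug into Theorem~\ref{thm:main_result}. Your write-up is in fact slightly more detailed than the paper's on the Lipschitz bound, but there is no substantive difference.
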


\begin{proof}
We apply Theorem \ref{thm:main_result} with the following functions $F$ and $\tilde{F}$:
\begin{equation*}
\forall x\in\left(\R^d\right)^n, \quad F(x)=\frac{1}{n}\sum_{k=1}^{n}f(x_i)
\end{equation*}
and
\begin{equation*}
\forall x\in\mathcal{C}\left([0,T],\R^d\right), \quad F(x)=\frac{1}{T}\int_{0}^{T}f(x_t){\rm d}t
\end{equation*}	
which are respectively $\frac{\|f\|_{\rm Lip}}{n}$-Lipschitz with respect to $d_n$ (defined by \eqref{eq:def_dn}) and $\frac{\|f\|_{\rm Lip}}{T}$-Lipschitz with respect to $d_T$ (defined by \eqref{eq:def_dT}).
\end{proof}

\section{Sketch of proof}\label{section:sketch_of_proof}

Recall that $F_Y$ and $\tilde{F}_Y$ are defined by \eqref{eq:def_FY_FtildeY}.
The key element to get the bound \eqref{eq:moment_F} and \eqref{eq:moment_Ftilde} is to decompose $F_Y$ and $\tilde{F}_Y$ into a sum of martingale increments as follows. Let $(\mathcal{F}_t)_{t\geqslant0}$ be the natural filtration associated to the standard Brownian motion $W$ from which the fBm is derived through \eqref{eq:volterra_rep}. For all $k\in\N$, set 
\begin{equation}
M_k:=\E[F_Y~|~\mathcal{F}_{t_k}]\quad\text{ and }\quad \tilde{M}_k:=\E[\tilde{F}_Y~|~\mathcal{F}_{k}].
\end{equation}
With these definitions, we have:
\begin{align}\label{eq:decomposition_sum_martingale_inc}
F_Y-\E[F_Y]=M_n=\sum_{k=1}^nM_k-M_{k-1}
\quad\text{ and }\quad\tilde{F}_Y-\E[\tilde{F}_Y]=\tilde{M}_{\lceil T\rceil}=\sum_{k=1}^{\lceil T\rceil}\tilde{M}_k-\tilde{M}_{k-1}
\end{align}
where $\lceil T\rceil$ denotes the least integer greater than or equal to $T$.\\

With this decomposition in hand, we first estimate the conditional exponential moments of the martingale increments $M_k-M_{k-1}$ and $\tilde{M}_k-\tilde{M}_{k-1}$ to get Proposition \ref{prop:exp_moment_F_Ftilde}. This is the purpose of Proposition \ref{prop:exp_moments_martingale_inc} for which the proof is based on the following lemma:
\begin{lem}\label{lem:moments_to_exponential_moments} Let $X$ be a centered real valued random variable such that for all $p\geq2$, there exist $C,\zeta>0$ such that
$$\E[|X|^p]\leq C\zeta^{p/2}p\Gamma\left(\frac{p}{2}\right).$$
Then for all $\lambda>0$,
$$\E[e^{\lambda X}]\leq e^{2C'\zeta\lambda^2}$$
with $C'=1\vee C$.
\end{lem}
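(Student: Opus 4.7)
The plan is to expand $\E[e^{\lambda X}]$ as a power series, use the centering assumption $\E[X]=0$ to discard the linear term, and then control the remaining series via the moment hypothesis. First I would write
$$\E[e^{\lambda X}] = 1 + \sum_{p\ge 2}\frac{\lambda^p\E[X^p]}{p!},$$
bound $|\E[X^p]|\le\E[|X|^p]\le C\zeta^{p/2}p\Gamma(p/2)$, and thereby obtain
$$\E[e^{\lambda X}] \le 1 + C\sum_{p\ge 2}\frac{\lambda^p\zeta^{p/2}p\Gamma(p/2)}{p!}.$$

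Next I would simplify the general coefficient using the identity $p\Gamma(p/2)=2\Gamma(p/2+1)$ together with Legendre's duplication formula $\Gamma((p+1)/2)\Gamma(p/2+1)=2^{-p}\sqrt{\pi}\,p!$, which gives $p\Gamma(p/2)/p!=2\sqrt{\pi}/(2^{p}\Gamma((p+1)/2))$. Setting $y=\lambda\sqrt{\zeta}/2$, the remaining task is to bound $\sum_{p\ge 2}y^{p}/\Gamma((p+1)/2)$. Splitting by the parity of $p$: for even $p=2k$ ($k\ge 1$), the identity $\Gamma(k+1/2)=(2k)!\sqrt{\pi}/(4^{k}k!)$ combined with the elementary bound $(2k)!\ge 2^{k}(k!)^{2}$ controls the even subsum by $(e^{\lambda^{2}\zeta/2}-1)/\sqrt{\pi}$; for odd $p=2k+1$ ($k\ge 1$), $\Gamma(k+1)=k!$ gives the odd subsum exactly $(\lambda\sqrt{\zeta}/2)(e^{\lambda^{2}\zeta/4}-1)$. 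The centering hypothesis is precisely what removes the $k=0$ odd term (the $p=1$ term). Putting everything together yields the key intermediate estimate
$$\E[e^{\lambda X}] \le 1 + 2C\bigl(e^{\lambda^{2}\zeta/2}-1\bigr) + C\sqrt{\pi}\,\lambda\sqrt{\zeta}\bigl(e^{\lambda^{2}\zeta/4}-1\bigr).$$

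The final step is to dominate the right-hand side by $e^{2C'\zeta\lambda^{2}}$ with $C'=1\vee C$. Setting $u=\zeta\lambda^{2}$, I would proceed by a case split on $C$: if $C\le 1$ (so $C'=1$), the inequality $2(e^{u/2}-1)\le e^{u}-1$ handles the even term, while the odd term, after applying $e^{u/4}-1\le(u/4)e^{u/4}$, produces a summand of order $u^{3/2}e^{u/4}$ that is absorbed into $e^{2u}$ by bounding $\sqrt{u}\,e^{-3u/4}$ by its maximum on $[0,\infty)$; if $C\ge 1$ (so $C'=C$), a direct term-by-term comparison of Taylor coefficients of the right-hand side with those of $e^{2Cu}$, exploiting $C\ge 1$ to dominate each coefficient, delivers the bound. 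The hard part will be precisely this last step: because centering only kills the $p=1$ contribution, the higher odd moments introduce a polynomial prefactor $\lambda\sqrt{\zeta}$ in the estimate, and carefully absorbing it into the exponential while tracking the constant $C'=1\vee C$ through the two regimes is the delicate book-keeping of the argument.
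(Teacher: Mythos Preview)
Your approach is essentially the same as the paper's: expand $e^{\lambda X}$ in a power series, kill the linear term by centering, plug in the moment bound, split by parity, and then absorb the polynomial prefactor coming from the odd terms into the target exponential. The intermediate estimate you reach,
\[
\E[e^{\lambda X}]\le 1+2C\bigl(e^{\lambda^{2}\zeta/2}-1\bigr)+C\sqrt{\pi}\,\lambda\sqrt{\zeta}\bigl(e^{\lambda^{2}\zeta/4}-1\bigr),
\]
is correct and is the analogue of the paper's bound $1+(1+|t|/2)(e^{t^{2}}-1)$.

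The difference is in how $C$ is handled. The paper absorbs $C$ into the parameter \emph{before} summing: since $C'=1\vee C\ge 1$ and $p\ge 2$, one has $C\le (C')^{p/2}$, hence $C\zeta^{p/2}\le (C'\zeta)^{p/2}$. Setting $t^{2}=\lambda^{2}C'\zeta$ then reduces everything to a single universal inequality $1+(1+|t|/2)(e^{t^{2}}-1)\le e^{2t^{2}}$, dispatched via $|t|/2\le e^{t^{2}}$. This avoids any case split on $C$.

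Your final step, by contrast, keeps $C$ outside and splits on $C\le 1$ versus $C\ge 1$. The $C\le 1$ case is fine (your sketch works once one reads ``absorbed into $e^{2u}$'' as ``absorbed into $e^{2u}-e^{u}\ge ue^{u}$'', which matches the $\sqrt{u}\,e^{-3u/4}$ bound you mention). The $C\ge 1$ case, however, is where there is a genuine gap: a ``term-by-term comparison of Taylor coefficients'' with $e^{2Cu}=\sum_{k}(2Cu)^{k}/k!$ cannot work directly, because your right-hand side contains the factor $\sqrt{u}$ from the odd moments and is therefore not a power series in $u$. You would still need to pair odd and even contributions or otherwise eliminate the half-integer powers, which amounts to redoing the absorption argument. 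The paper's early replacement $C\mapsto (C')^{p/2}$ is exactly the trick that sidesteps this difficulty.
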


\begin{proof} Since $X$ is centered, by using the series expansion of the exponential function, we have:
\begin{align}\label{eq:proof_exp_moment_X}
\E\left[\exp\left(\lambda X\right)\right]\leqslant 1 + C \sum_{p=2}^{+\infty}\frac{\lambda^p \zeta^{\frac{p}{2}}p\Gamma\left(\frac{p}{2}\right)}{p!}\leqslant 1+ \sum_{p=2}^{+\infty}\frac{\lambda^p (C'\zeta)^{\frac{p}{2}}p\Gamma\left(\frac{p}{2}\right)}{p!}
\end{align}
with $C'=1\vee C$.
We set $t^2=\lambda^2C'\zeta$, then
\begin{align*}
1+ \sum_{p=2}^{+\infty}\frac{(t^2)^{\frac{p}{2}}p\Gamma\left(\frac{p}{2}\right)}{p!}
&= 1+ \sum_{p=1}^{+\infty}\frac{(t^2)^{p}2p\Gamma\left(p\right)}{(2p)!} + \sum_{p=1}^{+\infty}\frac{(t^2)^{p+\frac{1}{2}}(2p+1)\Gamma\left(p+\frac{1}{2}\right)}{(2p+1)!}\\
&=1+ 2\sum_{p=1}^{+\infty}\frac{(t^2)^{p}\Gamma\left(p+1\right)}{(2p)!} + |t|\sum_{p=1}^{+\infty}\frac{(t^2)^{p}\Gamma\left(p+\frac{1}{2}\right)}{(2p)!}\\
&\leqslant1+(2+|t|)\sum_{p=1}^{+\infty}\frac{(t^2)^{p}p!}{(2p)!}\\
&\leqslant 1+\left(1+\frac{|t|}{2}\right)(e^{t^2}-1)\quad \text{ since }\quad 2(p!)^2\leqslant(2p)!~.
\end{align*}
Since for all $t\in\R$, $\frac{|t|}{2}\leqslant e^{t^2}$, we get $\frac{|t|}{2}(e^{t^2}-1)\leqslant e^{t^2}(e^{t^2}-1)$ which is equivalent to $$1+\left(1+\frac{|t|}{2}\right)(e^{t^2}-1)\leqslant e^{2t^2},$$ so that:
$$1+ \sum_{p=2}^{+\infty}\frac{(t^2)^{\frac{p}{2}}p\Gamma\left(\frac{p}{2}\right)}{p!}\leqslant e^{2t^2}.$$
Hence, we have in \eqref{eq:proof_exp_moment_X}:
\begin{equation*}
\E\left[\exp\left(\lambda X\right)\right]\leqslant \exp\left(2\lambda^2\zeta C'\right)
\end{equation*}
which concludes the proof.

\end{proof}

\begin{rem} The previous proof follows the proof of Lemma 1.5 in Chapter 1 of \cite{rigollet2017high}. We chose to give the details here since this step is crucial to get our main results.
\end{rem}

Finally, the end of the proof of Proposition \ref{prop:exp_moment_F_Ftilde} $(i)$ is based on the following implication:
if there exists a deterministic sequence $(u_k)$ such that
$$\E\left[\left.e^{\lambda(M_k-M_{k-1})}\right|\mathcal{F}_{k-1}\right]\leq e^{\lambda^2u_k},$$
then
$$\E\left[e^{\lambda M_n}\right]=\E\left[e^{\lambda M_{n-1}}\E\left[\left.e^{\lambda(M_n-M_{n-1})}\right|\mathcal{F}_{n-1}\right]\right]\leq \exp\left(\lambda^2u_n\right)\E\left[e^{\lambda M_{n-1}}\right]$$
so that 
$$\E\left[e^{\lambda M_n}\right]\leq \exp\left(\lambda^2\sum_{k=1}^{n}u_k\right).$$
The same arguments are used for item $(ii)$ of Proposition \ref{prop:exp_moment_F_Ftilde}.
\vspace{2mm}

Sections \ref{section:sum_martingale} and \ref{section:moments} are devoted to the proof of Proposition \ref{prop:exp_moment_F_Ftilde}. The first step, detailed in Section \ref{section:sum_martingale}, consists in giving a new expression to the martingale increments and to control them. The second step, which is outlined in Section \ref{subsection:cond_moments}, focuses on managing the conditional moments of these increments to get Proposition \ref{prop:exp_moments_martingale_inc}. The proof of Proposition \ref{prop:exp_moment_F_Ftilde} is finally achieved in Section \ref{subsection:proof_exp_moment}.\\

Throughout the paper, constants may change from line to line and may depend on $\sigma$ without being specified.

\section{Control of the martingale increments}\label{section:sum_martingale}

For the sake of clarity, we set $\Delta=1$ in the sequel, so that by \eqref{eq:def_FY_FtildeY} we have $t_k=k$. When $\Delta>0$ is arbitrary, the arguments are the same, it sufficies to apply a rescaling.\\

Through equation \eqref{eq:sde} and the fact that $b$ is Lipschitz continuous, for all $t\geqslant0$,~ $Y_{t}$ can be seen as a measurable functional of the time $t$, the initial condition $x$ and the Brownian motion $(W_s)_{s\in[0,t]}$. Denote by $\Phi:\R_+\times\R^d\times\mathcal{C}(\R_+,\R^d)\to\R^d$ this functional, we then have 
\begin{equation}
\forall t\geqslant0,\quad Y_{t}:=\Phi_t(x,(W_s)_{s\in[0,t]}).
\end{equation}

Now, let $k\geqslant1$, we have
\begin{align}\label{eq:majo_M}
&|M_k-M_{k-1}|\nonumber\\
&=|\E[F_Y|\F_k]-\E[F_Y|\F_{k-1}]|\nonumber\\
&\leqslant\int_{\Omega}\left|F\left(\Phi_1\left(x,W_{[0,1]}\right),\dots,\Phi_k\left(x,W_{[0,k]}\right),\Phi_{k+1}\left(x,W_{[0,k]}\sqcup\tilde{w}_{[k,k+1]}\right),\dots,\Phi_n\left(x,W_{[0,k]}\sqcup\tilde{w}_{[k,n]}\right)\right)\right.\nonumber\\
&\left.~~~-F\left(\Phi_1\left(x,W_{[0,1]}\right),\dots,\Phi_{k-1}\left(x,W_{[0,k-1]}\right),\Phi_{k}\left(x,W_{[0,k-1]}\sqcup\tilde{w}_{[k-1,k]}\right),\dots,\Phi_n\left(x,W_{[0,k-1]}\sqcup\tilde{w}_{[k-1,n]}\right)\right)\right|\PP_W({\rm d}\tilde{w})\nonumber\\
&\leqslant\|F\|_{\rm Lip}\int_{\Omega}\sum_{t=k}^n\left|\Phi_t\left(x,W_{[0,k]}\sqcup\tilde{w}_{[k,t]}\right)-\Phi_t\left(x,W_{[0,k-1]}\sqcup\tilde{w}_{[k-1,t]}\right)\right|\PP_W({\rm d}\tilde{w}).
\end{align}

With exactly the same procedure, we get

\begin{align}\label{eq:majo_Mtilde}
|\tilde{M}_k-\tilde{M}_{k-1}|\leqslant\|\tilde{F}\|_{\rm Lip}\int_{\Omega}\int_{k-1}^T\left|\Phi_t\left(x,W_{[0,k]}\sqcup\tilde{w}_{[k,t]}\right)-\Phi_t\left(x,W_{[0,k-1]}\sqcup\tilde{w}_{[k-1,t]}\right)\right|{\rm d}t~\PP_W({\rm d}\tilde{w}).
\end{align}

Let us introduce now some notations. First, for all $t\geqslant0$ set $u:=t-k+1$, then for all $u\geqslant0$, we define 
$$
X_u:=\left\{\begin{array}{lll}
\Phi_{u+k-1}\left(x,(W_s)_{s\in[0,k]}\sqcup(\tilde{w}_s)_{s\in[k,u+k-1]}\right) & \text{if } u\geqslant1 \\
\Phi_{u+k-1}\left(x,(W_s)_{s\in[0,u+k-1]}\right) & \text{otherwise},  
\end{array}\right.
$$
and
$$
\tilde{X}_u:=\Phi_{u+k-1}\left(x,(W_s)_{s\in[0,k-1]}\sqcup(\tilde{w}_s)_{s\in[k-1,u+k-1]}\right).
$$
We then have
\begin{align}\label{eq:sde_X}
X_{u}=&X_0+\int_{0}^u b(X_s){\rm d}s +\sigma\int_{0}^{k-1}(K^{}_H(u+k-1,s)-K^{}_H(k-1,s)){\rm d}W_s\nonumber\\
&\quad+\sigma\int_{k-1}^{k\wedge(u+k-1)}K^{}_H(u+k-1,s){\rm d}W_s+\sigma\int_{k}^{k\vee(u+k-1)}K^{}_H(u+k-1,s){\rm d}\tilde{w}_s
\end{align}
and
\begin{align}\label{eq:sde_Xtilde}
\tilde{X}_{u}=&\tilde{X}_0+\int_{0}^u b(\tilde{X}_s){\rm d}s +\sigma\int_{0}^{k-1}(K^{}_H(u+k-1,s)-K^{}_H(k-1,s)){\rm d}W_s+\sigma\int_{k-1}^{u+k-1}K^{}_H(u+k-1,s){\rm d}\tilde{w}_s.
\end{align}
\begin{rem} Let us note that the integrals involving $\tilde{w}$ in \eqref{eq:sde_X} and \eqref{eq:sde_Xtilde} and in the sequel have to be seen as Wiener integrals, so that they are defined $\PP_W({\rm d}\tilde{w})$ almost surely.
\end{rem}
Since $X_{0}=\tilde{X}_{0}=\Phi_{k-1}\left(x,(W_s)_{s\in[0,k-1]}\right)$, we deduce from $\eqref{eq:sde_X}$ and $\eqref{eq:sde_Xtilde}$ that for all $u\geqslant0$
\begin{align}\label{eq:X-Xtilde}
X_{u}-\tilde{X}_{u}&=\int_{0}^u b(X_s)-b(\tilde{X}_s){\rm d}s+\sigma\int_{k-1}^{k\wedge(u+k-1)}K^{}_H(u+k-1,s){\rm d}(W-\tilde{w})_s\nonumber\\
&=\int_{0}^u b(X_s)-b(\tilde{X}_s){\rm d}s+\sigma\int_{0}^{1\wedge u}K^{}_H(u+k-1,s+k-1){\rm d}(W^{(k)}-\tilde{w}^{(k)})_s
\end{align}
where we have set $(W^{(k)}_s)_{s\geqslant 0}:=(W_{s+k-1}-W_{k-1})_{s\geqslant0}$ which is a Brownian motion independent from $\F_{k-1}$ and $(\tilde{w}^{(k)}_s)_{s\geqslant0}:=(\tilde{w}_{s+k-1}-\tilde{w}_{k-1})_{s\geqslant0}$.\\

In the remainder of the section, we proceed to a control of the quantity $|X_u-\tilde{X}_u|$.
We have the following upper bound on $|X_u-\tilde{X}_u|$:

\begin{propo}\label{prop:control_X_Xtilde}
There exists $C_H>0$ such that for all $u>0$ and $k\in\N^*$,
\begin{align}
&|X_u-\tilde{X}_u|\nonumber\\
&\quad\leqslant C_H\sqrt{\Psi_H(u\vee1,k)}\left(\sup\limits_{v\in[0,1]}|W^{(k)}_v-\tilde{w}^{(k)}_v|+\sup\limits_{v\in[0,1/2]}\left|\int_0^1s^{\frac{1}{2}-H}\left(1-vs\right)^{H-\frac{3}{2}}{\rm d}(W^{(k)}-\tilde{w}^{(k)})_s\right|+\sup\limits_{v\in[0,2]}|G^{(k)}_v|\right)
\end{align}
where $X_u-\tilde{X}_u$ is defined in \eqref{eq:X-Xtilde}, $\Psi_H$ is defined by
$$\Psi_H(u,k):=C'_H\left\{\begin{array}{lll}u^{2H-3}& \text{if} &H\in(0,1/2)\\
k^{1-2H}u^{4H-4}+u^{2H-3}& \text{if} &H\in(1/2,1)\end{array}\right.$$ with $C'_H>0$ and $G^{(k)}$ is given by 
\begin{equation*}
G_v^{(k)}=\int_0^{1\wedge v}K^{}_H(v+k-1,s+k-1){\rm d}(W^{(k)}-\tilde{w}^{(k)})_s.
\end{equation*}
\end{propo}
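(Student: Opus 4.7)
The plan is a two-stage reduction. First, exploit the contractivity of $b$ to reduce the estimate on $h_u:=X_u-\tilde{X}_u$ to an estimate on the noise term $G^{(k)}$ alone. Second, use the explicit Volterra structure of $K_H$ together with integration by parts in the Wiener integral to bound $G^{(k)}_v$ pointwise by the three suprema in the statement, with the sharp $(u,k)$-dependent prefactor $\sqrt{\Psi_H(v\vee 1,k)}$.

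For the contractivity step, I set $\eta_u:=h_u-\sigma G^{(k)}_u$. By \eqref{eq:X-Xtilde} this equals $\int_0^u(b(X_s)-b(\tilde X_s))\,ds$ and is therefore absolutely continuous with $\dot\eta_u=b(X_u)-b(\tilde X_u)$. Differentiating $\tfrac12|\eta_u|^2$ and applying Hypothesis \ref{hyp:b-coercive}(i) to the contribution $\langle h_u,b(X_u)-b(\tilde X_u)\rangle$ and Hypothesis \ref{hyp:b-coercive}(ii) combined with Young's inequality to $-\sigma\langle G^{(k)}_u,b(X_u)-b(\tilde X_u)\rangle$ yields
\[
\frac{d}{du}|\eta_u|^2\le -\alpha|h_u|^2+\frac{\sigma^2L^2}{\alpha}|G^{(k)}_u|^2.
\]
Using the elementary bound $|h_u|^2\ge\tfrac12|\eta_u|^2-\sigma^2|G^{(k)}_u|^2$ converts this into $\frac{d}{du}|\eta_u|^2\le-\tfrac\alpha2|\eta_u|^2+C|G^{(k)}_u|^2$, and Grönwall (with $\eta_0=0$) followed by $|h_u|^2\le 2|\eta_u|^2+2\sigma^2|G^{(k)}_u|^2$ gives
\[
|h_u|^2\le C\Big(|G^{(k)}_u|^2+\int_0^u e^{-\alpha(u-s)/2}|G^{(k)}_s|^2\,ds\Big).
\]

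For the second step, writing $S_1,S_2,S_3$ for the three suprema in the statement, I would establish the pointwise bound $|G^{(k)}_v|\le C_H\sqrt{\Psi_H(v\vee 1,k)}\,(S_1+S_2+S_3)$ for every $v\ge 0$. When $v\le 2$ this is immediate since $|G^{(k)}_v|\le S_3$ and $\Psi_H(1,k)\ge 1$. When $v>2$, the integrand $s\mapsto K_H(v+k-1,s+k-1)$ is $C^1$ on $[0,1]$, so integration by parts in the Wiener integral produces a boundary term $K_H(v+k-1,k)(W^{(k)}_1-\tilde{w}^{(k)}_1)$, controlled by $S_1$ times a polynomial factor, together with an integral of $\partial_s K_H$. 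Decomposing $K_H$ into its two Volterra summands from \eqref{eq:kernel_K_H} and using the identity
\[
\partial_t K_H(t,s)=c_H(H-\tfrac12)\,t^{2H-2}\,s^{1/2-H}(1-s/t)^{H-3/2},
\]
the singular contribution can be rescaled so that $1/t\in(0,1/2]$ plays the role of the parameter $v$ appearing in $S_2$ (this is valid precisely because $t=v+k-1\ge 2$), thereby absorbing that piece into $S_2$; the remaining polynomial $(v,k)$-prefactors, once tracked carefully, assemble into $\sqrt{\Psi_H(v,k)}$. Plugging this back into the Grönwall bound together with the elementary estimate $\int_0^u e^{-\alpha(u-s)/2}\Psi_H(s\vee 1,k)\,ds\lesssim\Psi_H(u\vee 1,k)$ (the integrand decays polynomially, so the exponential weight concentrates the mass near $s=u$) completes the proof.

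The main obstacle is the second step: identifying the change of variable that puts the kernel into the normalised form $s^{1/2-H}(1-vs)^{H-3/2}$ and simultaneously tracking the $(u,k)$-prefactors sharply enough to recover $\Psi_H$ with the stated exponents. The dichotomy $H\lessgtr 1/2$ in $\Psi_H$, in particular the extra term $k^{1-2H}u^{4H-4}$ for $H>1/2$, reflects the change of sign of $H-\tfrac12$ in the Volterra representation of $B$, which flips the monotonicity of $t\mapsto t^{H-1/2}$ and hence changes which terms dominate in the decomposition of $\partial_t K_H$.
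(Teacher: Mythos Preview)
Your contractivity step is fine, but the second step contains a genuine gap: the pointwise bound $|G^{(k)}_v|\le C_H\sqrt{\Psi_H(v\vee1,k)}(S_1+S_2+S_3)$ is false. For $H<1/2$ the kernel $K_H(t,s)$ converges to a nonzero limit as $t\to\infty$ (the integral $\int_s^\infty u^{H-3/2}(u-s)^{H-1/2}\,du$ in \eqref{eq:kernel_K_H} is finite), so $G^{(k)}_v$ converges to a nonzero Gaussian random variable while $\sqrt{\Psi_H(v,k)}=C_H v^{H-3/2}\to 0$. For $H>1/2$ the situation is no better: $K_H(t,s)\sim c\,s^{1/2-H}t^{2H-1}$, so $|G^{(k)}_v|$ grows like $v^{2H-1}$ whereas $\sqrt{\Psi_H(v,k)}\sim k^{(1-2H)/2}v^{2H-2}$. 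Either way the ratio $|G^{(k)}_v|/\sqrt{\Psi_H(v,k)}$ diverges, and your integration-by-parts argument cannot produce the claimed decay because the boundary term $K_H(v+k-1,k)(W^{(k)}_1-\tilde w^{(k)}_1)$ already fails it.

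The paper avoids this by differentiating $|X_u-\tilde X_u|^2$ \emph{directly} rather than subtracting $\sigma G^{(k)}_u$ first. The point is that $\frac{d}{du}|h_u|^2=2\langle h_u,b(X_u)-b(\tilde X_u)\rangle+2\langle h_u,\sigma\,\partial_u G^{(k)}_u\rangle$, so after the contractivity and Young steps the forcing in Gr\"onwall is $|\varphi_k(v)|^2:=|\partial_v G^{(k)}_v|^2$, not $|G^{(k)}_v|^2$. Since $\partial_t K_H(t,s)=c_H(t/s)^{H-1/2}(t-s)^{H-3/2}$ decays like $t^{2H-2}$, the derivative $\varphi_k$ \emph{does} satisfy a bound of the form $|\varphi_k(v)|^2\lesssim(\text{quantity independent of }v)\times(\text{polynomial in }v,k)$, and integrating $e^{-\alpha(u-v)}|\varphi_k(v)|^2$ over $[2,u]$ produces exactly $\Psi_H(u,k)$. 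Your reduction to $|G^{(k)}_s|^2$ throws away one derivative of decay and cannot recover it; the resulting bound on $|h_u|$ would be $O(1)$ for $H<1/2$ and $O(u^{2H-1})$ for $H>1/2$, which after summation in $u$ and $k$ gives exponents far worse than $n^{2H\vee1}$.
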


In Subsections \ref{subsection:first_case} and \ref{subsection:second_case}, we prove Proposition \ref{prop:control_X_Xtilde}.

\subsection{First case : $u\geqslant2$}\label{subsection:first_case}
\subsubsection{When $k\neq1$}
\begin{lem}\label{lem:control_X_Xtilde_first_case} Let $k\neq1$. Then, for all $u\geqslant2$,
\begin{equation*}
|X_{u}-\tilde{X}_{u}|^2\leqslant e^{-\alpha(u-2)}|X_2-\tilde{X}_2|^2+ \Psi_H(u,k)\sup\limits_{s\in[0,1]}|W_s^{(k)}-\tilde{w}^{(k)}_s|^2
\end{equation*}
where $\Psi_H$ is defined in Proposition \ref{prop:control_X_Xtilde}.
\end{lem}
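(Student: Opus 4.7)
The plan is to treat $|D_u|^2 := |X_u - \tilde{X}_u|^2$ as the energy of a contractive ODE with forcing, and to control the forcing term by integrating by parts in the Wiener integral defining $G^{(k)}$. Since $u \geq 2$ we have $1 \wedge u = 1$, so identity \eqref{eq:X-Xtilde} becomes
\begin{equation*}
D_u = \int_0^u\bigl(b(X_s) - b(\tilde{X}_s)\bigr)\,{\rm d}s + \sigma G_u^{(k)}, \qquad G_u^{(k)} = \int_0^1 K_H(u+k-1, s+k-1)\,{\rm d}Z_s,
\end{equation*}
with $Z := W^{(k)} - \tilde{w}^{(k)}$. Because $u - \xi \geq 1$ on $[0,1]$, the kernel is smooth in $u$, and a direct computation from \eqref{eq:kernel_K_H} gives $\partial_t K_H(t,s) = c_H(H-\tfrac{1}{2})(t/s)^{H-1/2}(t-s)^{H-3/2}$, so that $\dot{D}_u = b(X_u) - b(\tilde{X}_u) + \sigma\dot{G}_u^{(k)}$.

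Applying Hypothesis \ref{hyp:b-coercive}(i) together with Young's inequality $2\langle D_u, \sigma\dot{G}_u^{(k)}\rangle \leq \alpha|D_u|^2 + |\sigma|^2|\dot{G}_u^{(k)}|^2/\alpha$ yields
\begin{equation*}
\frac{{\rm d}}{{\rm d}u}|D_u|^2 \leq -\alpha|D_u|^2 + \frac{|\sigma|^2}{\alpha}|\dot{G}_u^{(k)}|^2,
\end{equation*}
and a standard Grönwall argument gives $|D_u|^2 \leq e^{-\alpha(u-2)}|D_2|^2 + \tfrac{|\sigma|^2}{\alpha}\int_2^u e^{-\alpha(u-s)}|\dot{G}_s^{(k)}|^2\,{\rm d}s$. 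It remains to show that the integral is bounded by $C_H\Psi_H(u,k)\sup_{v\in[0,1]}|Z_v|^2$.

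To bound $\dot{G}_s^{(k)} = \int_0^1 \phi_{s,k}(\xi)\,{\rm d}Z_\xi$ with $\phi_{s,k}(\xi) := c_H(H-\tfrac{1}{2})((s+k-1)/(\xi+k-1))^{H-1/2}(s-\xi)^{H-3/2}$, I integrate by parts in the Wiener integral. The boundary term at $\xi = 0$ vanishes because $Z_0 = 0$, leaving
\begin{equation*}
|\dot{G}_s^{(k)}| \leq \Bigl(|\phi_{s,k}(1)| + \int_0^1|\phi_{s,k}'(\xi)|\,{\rm d}\xi\Bigr)\sup_{v\in[0,1]}|Z_v|.
\end{equation*}
A direct differentiation and elementary monotonicity estimates on $[0,1]$ using $\xi + k - 1 \in [k-1, k]$ and $s - \xi \in [s-1, s]$ then produce a deterministic pointwise bound on $|\dot{G}_s^{(k)}|^2$ of the same shape as $\Psi_H$. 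The dichotomy between $H < 1/2$ and $H > 1/2$ in $\Psi_H$ reflects the sign change of $H - \tfrac{1}{2}$, which reverses the monotonicity of $(\xi+k-1)^{1/2-H}$ and so introduces nontrivial $k$-dependence in the estimate only in the regime $H > 1/2$. Plugging the resulting bound back and using the elementary fact that $\int_2^u e^{-\alpha(u-s)}(s-1)^\beta\,{\rm d}s \leq C_{\alpha}u^\beta$ for $\beta < 0$ (obtained by splitting the interval at $u/2$) delivers exactly the advertised $u^{2H-3}$ term, with an additional $k^{1-2H}u^{4H-4}$ contribution when $H > 1/2$.

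I expect the main technical obstacle to be Step 3, namely the bookkeeping of exponents in the integration-by-parts remainder $\int_0^1 |\phi_{s,k}'(\xi)|\,{\rm d}\xi$: the two factors $(\xi+k-1)^{H-1/2}$ and $(s-\xi)^{H-3/2}$ each switch monotonicity at $H = 1/2$, and the estimate must be tight enough in each regime to reproduce $\Psi_H$ exactly, in particular to recover the precise $k^{1-2H}$ factor that appears only when $H > 1/2$.
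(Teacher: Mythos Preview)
Your proposal is correct and follows essentially the same route as the paper: coercivity plus Young's inequality yields the differential inequality, Gr\"onwall gives the exponential-decay-plus-forcing structure, and the forcing $\dot G_s^{(k)}$ is converted into a $\sup_{[0,1]}|Z|$ bound via integration by parts in the Wiener integral. The paper carries out your Step~3 by writing the integration by parts explicitly as three terms $I_1,I_2,I_3$ (boundary term and two pieces of $\phi'_{s,k}$), each of which is reduced to the common integral $\int_2^u e^{-\alpha(u-v)}k^{1-2H}(v-1+k)^{2H-1}(v-1)^{2H-3}\,{\rm d}v$; the dichotomy you anticipate is then handled by the elementary inequalities $(v-1+k)^{2H-1}\le (v-1)^{2H-1}+k^{2H-1}$ for $H>1/2$ and $(v-1+k)^{2H-1}\le k^{2H-1}$ for $H<1/2$, together with an integration-by-parts version of your ``split at $u/2$'' lemma.
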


\begin{proof}
Let $u\geqslant2$.
In the following inequalities, we make use of Hypothesis \ref{hyp:b-coercive} on the function $b$ and of the elementary Young inequality $\langle a,b\rangle\leqslant\frac{1}{2}\left(\varepsilon|a|^2+\frac{1}{\varepsilon}|b|^2\right)$ with $\varepsilon=2\alpha$. By \eqref{eq:X-Xtilde},
\begin{align}
\frac{{\rm d}}{{\rm d}u}|X_{u}-\tilde{X}_{u}|^2&=2\langle X_u-\tilde{X}_u,b(X_u)-b(\tilde{X}_u)\rangle+\langle X_u-\tilde{X}_u,~\sigma\int_0^1\frac{\partial}{\partial u}K^{}_H(u+k-1,s+k-1){\rm d}(W^{(k)}-\tilde{w}^{(k)})_s\rangle\nonumber\\
&\leqslant -2\alpha|X_{u}-\tilde{X}_{u}|^2+\alpha|X_{u}-\tilde{X}_{u}|^2+\frac{|\sigma|^2}{2\alpha}\left|\int_0^1\frac{\partial}{\partial u}K^{}_H(u+k-1,s+k-1){\rm d}(W^{(k)}-\tilde{w}^{(k)})_s\right|^2\nonumber\\
&\leqslant -\alpha|X_{u}-\tilde{X}_{u}|^2+\frac{|\sigma|^2}{2\alpha}\left|\int_0^1\frac{\partial}{\partial u}K^{}_H(u+k-1,s+k-1){\rm d}(W^{(k)}-\tilde{w}^{(k)})_s\right|^2.\nonumber
\end{align}
We then apply Gronwall's lemma to obtain
\begin{equation}\label{eq:proof2}
|X_{u}-\tilde{X}_{u}|^2\leqslant e^{-\alpha(u-2)}|X_2-\tilde{X}_2|^2+\frac{|\sigma|^2}{2\alpha}\int_{2}^ue^{-\alpha(u-v)}\left|\int_0^1\frac{\partial}{\partial v}K^{}_H(v+k-1,s+k-1){\rm d}(W^{(k)}-\tilde{w}^{(k)})_s\right|^2{\rm d}v.
\end{equation} 
Now, we set for all $v\geqslant2$,
\begin{equation}\label{def:phi_k}
\varphi_k(v):=\int_0^1\frac{\partial}{\partial v}K^{}_H(v+k-1,s+k-1){\rm d}(W^{(k)}-\tilde{w}^{(k)})_s=c_H\int_0^1\left(\frac{v+k-1}{s+k-1}\right)^{H-\frac{1}{2}}(v-s)^{H-\frac{3}{2}}{\rm d}(W^{(k)}-\tilde{w}^{(k)})_s.
\end{equation} 

We apply an integration by parts to $\varphi_k$ taking into account that $W^{(k)}_0=\tilde{w}^{(k)}_0=0$:
\begin{align}\label{eq:phi_k_IPP}
\varphi_k(v)&=c_H\left(\frac{v+k-1}{k}\right)^{H-\frac{1}{2}}(v-1)^{H-\frac{3}{2}}(W^{(k)}_1-\tilde{w}^{(k)}_1)\nonumber\\
&\quad\quad -c_H(1/2-H)\int_0^1(v+k-1)^{H-\frac{1}{2}}(s+k-1)^{-H-\frac{1}{2}}(v-s)^{H-\frac{3}{2}}(W^{(k)}_s-\tilde{w}^{(k)}_s){\rm d}s\nonumber\\
&\quad\quad\quad -c_H(3/2-H)\int_0^1\left(\frac{v+k-1}{s+k-1}\right)^{H-\frac{1}{2}}(v-s)^{H-\frac{5}{2}}(W^{(k)}_s-\tilde{w}^{(k)}_s){\rm d}s\nonumber\\
&=:c_H(I_1(v)+I_2(v)+I_3(v)).
\end{align}
Recall that by \eqref{eq:proof2}, our goal here is to manage 
\begin{align}\label{eq:term_to_control}
&\int_2^ue^{-\alpha(u-v)}|\varphi_k(v)|^2{\rm d}v\nonumber\\
&\quad\quad\leqslant 3c_H^2\left(\int_2^ue^{-\alpha(u-v)}|I_1(v)|^2{\rm d}v+\int_2^ue^{-\alpha(u-v)}|I_2(v)|^2{\rm d}v+\int_2^ue^{-\alpha(u-v)}|I_3(v)|^2{\rm d}v\right)
\end{align} 

To control each term involving $I_1$, $I_2$ and $I_3$ in \eqref{eq:term_to_control}, we will need the following inequality:

\begin{align}\label{eq:terme_commun}
&\int_2^ue^{-\alpha(u-v)}k^{1-2H}(v-1+k)^{2H-1}(v-1)^{2H-3}{\rm d}v\nonumber\\
&\quad\quad\quad\leqslant C_H\left\{\begin{array}{lll}
k^{1-2H}(u-1)^{4H-4}+(u-1)^{2H-3} & \text{for} & H>1/2 \\
(u-1)^{2H-3} & \text{for} & H<1/2 
\end{array}\right..
\end{align}
Inequality \eqref{eq:terme_commun} is obtained through Lemma \ref{lem:control_integral} and the elementary inequalities $(v-1+k)^{2H-1}\leqslant (v-1)^{2H-1}+k^{2H-1}$ if $H>1/2$ and $(v-1+k)^{2H-1}\leqslant k^{2H-1}$ if $H<1/2$.

\begin{lem}\label{lem:control_integral} Let $\alpha,\beta>0$. Then, for all $u\geqslant2$,
\begin{equation*}
\int_{2}^u e^{-\alpha (u-v)} (v-1)^{-\beta}{\rm d}v\leqslant C_{\alpha,\beta}(u-1)^{-\beta}.
\end{equation*}
\end{lem}
\begin{proof}
It is enough to apply an integration by parts and then use that $$\sup\limits_{v\in[2,u]}e^{-\alpha(u-v)}(v-1)^{-\beta-1}=\max\left(e^{-\alpha(u-2)},(u-1)^{-\beta-1}\right)$$ to conclude the proof.
\end{proof}
It remains to show how the terms involving $I_1$, $I_2$ and $I_3$ in \eqref{eq:term_to_control} can be reduced to the term \eqref{eq:terme_commun}. Let us begin with $I_1$ which is straightforward:
\begin{align}\label{eq:I_1_control}
\int_2^ue^{-\alpha(u-v)}|I_1(v)|^2{\rm d}v&\leqslant |W^{(k)}_1-w^{(k)}_1|^2\int_2^ue^{-\alpha(u-v)}k^{1-2H}(v-1+k)^{2H-1}(v-1)^{2H-3}{\rm d}v\nonumber\\
&\leqslant \sup\limits_{s\in[0,1]}|W^{(k)}_s-w^{(k)}_s|^2 \int_2^ue^{-\alpha(u-v)}k^{1-2H}(v-1+k)^{2H-1}(v-1)^{2H-3}{\rm d}v .
\end{align}
Then, using the definition of $I_2$, 
\begin{align}\label{eq:I_2_control}
&\int_2^ue^{-\alpha(u-v)}|I_2(v)|^2{\rm d}v\nonumber\\
&\quad\quad\leqslant(1/2-H)^2 \int_2^ue^{-\alpha(u-v)}(v-1+k)^{2H-1}(v-1)^{2H-3}(k-1)^{-2H-1}\left(\int_{0}^1|W^{(k)}_s-w^{(k)}_s|{\rm d}s\right)^2{\rm d}v\nonumber\\
&\quad\quad\leqslant C_H \sup\limits_{s\in[0,1]}|W^{(k)}_s-w^{(k)}_s|^2 \int_2^ue^{-\alpha(u-v)}k^{1-2H}(v-1+k)^{2H-1}(v-1)^{2H-3}{\rm d}v.
\end{align}
Finally,
\begin{align}\label{eq:I_3_control}
&\int_2^ue^{-\alpha(u-v)}|I_3(v)|^2{\rm d}v\nonumber\\
&\quad\quad\leqslant(3/2-H)^2 \int_2^ue^{-\alpha(u-v)}(v-1+k)^{2H-1}(v-1)^{2H-5}\left(\int_{0}^1(s+k-1)^{\frac{1}{2}-H}|W^{(k)}_s-w^{(k)}_s|{\rm d}s\right)^2{\rm d}v\nonumber\\
&\quad\quad\leqslant C_H \sup\limits_{s\in[0,1]}|W^{(k)}_s-w^{(k)}_s|^2 \int_2^ue^{-\alpha(u-v)}(v-1+k)^{2H-1}(v-1)^{2H-3}\left(\int_{0}^1(s+k-1)^{\frac{1}{2}-H}{\rm d}s\right)^2{\rm d}v\nonumber\\
&\quad\quad\leqslant C'_H \sup\limits_{s\in[0,1]}|W^{(k)}_s-w^{(k)}_s|^2 \int_2^ue^{-\alpha(u-v)}k^{1-2H}(v-1+k)^{2H-1}(v-1)^{2H-3}{\rm d}v
\end{align}
where the last inequality is given by the following fact: there exists $C_H>0$ such that for all $k\neq1$, $\sup\limits_{s\in[0,1]}(s+k-1)^{\frac{1}{2}-H}\leqslant C_Hk^{\frac{1}{2}-H}$.\\
It remains to combine the three above inequalities \eqref{eq:I_1_control}, \eqref{eq:I_2_control} and \eqref{eq:I_3_control} with \eqref{eq:terme_commun}  to get the following in \eqref{eq:term_to_control}:
\begin{equation*}
\int_2^ue^{-\alpha(u-v)}|\varphi_k(v)|^2{\rm d}v\leqslant C_H\sup\limits_{s\in[0,1]}|W^{(k)}_s-w^{(k)}_s|^2\left\{\begin{array}{lll}
k^{1-2H}(u-1)^{4H-4}+(u-1)^{2H-3} & \text{for} & H>1/2 \\
(u-1)^{2H-3} & \text{for} & H<1/2 
\end{array}\right..
\end{equation*} 
Putting this inequality into \eqref{eq:proof2} gives the result (we can replace $u-1$ by $u$, the inequality remains true when $u\geqslant2$ up to a constant).
\end{proof}

\subsubsection{When $k=1$}

\begin{lem}\label{lem:control_X_Xtilde_second_case} Let $k=1$. Then, for all $u\geqslant2$,
\begin{equation*}
|X_{u}-\tilde{X}_{u}|^2\leqslant e^{-\alpha(u-2)}|X_2-\tilde{X}_2|^2+ \Psi_H(u,1)\sup\limits_{v\in[0,1/2]}\left|\int_0^1 s^{\frac{1}{2}-H}(1-vs)^{H-\frac{3}{2}}{\rm d}\left(W^{(1)}-\tilde{w}^{(1)}\right)_s\right|^2
\end{equation*}
where $\Psi_H$ is defined in Proposition \ref{prop:control_X_Xtilde}.
\end{lem}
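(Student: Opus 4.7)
The plan is to follow the same coercivity–plus–Gronwall strategy as in Lemma \ref{lem:control_X_Xtilde_first_case}, diverging only at the step where that argument relies on an integration by parts in $s$. Specialising \eqref{eq:X-Xtilde} to $k=1$ simplifies the kernel to $K_H(u,s)$ on $s\in[0,1]$ once $u\geq 2$. I would then differentiate $|X_u-\tilde X_u|^2$, apply Hypothesis \ref{hyp:b-coercive}(i) together with Young's inequality ($\varepsilon=2\alpha$) and Gronwall on $[2,u]$, exactly as in \eqref{eq:proof2}, to reduce the statement to an estimate on $\int_2^u e^{-\alpha(u-v)}|\varphi_1(v)|^2\,dv$ with $\varphi_1$ given by \eqref{def:phi_k} at $k=1$.

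The real obstacle is that the decomposition \eqref{eq:phi_k_IPP} is unavailable here: setting $k=1$ turns the weight $(s+k-1)^{-H-1/2}$ into $s^{-H-1/2}$, which fails to be integrable at $s=0$ when $H>1/2$, so the $L^\infty$-control of $W^{(k)}-\tilde w^{(k)}$ on $[0,1]$ breaks down. I propose to bypass this by keeping the stochastic integral over $s\in[0,1]$ intact and extracting the $v$-decay by a scaling. Using $(v-s)^{H-3/2}=v^{H-3/2}(1-s/v)^{H-3/2}$ and $(v/s)^{H-1/2}=v^{H-1/2}s^{1/2-H}$ rewrites
$$\varphi_1(v)=c_H\,v^{2H-2}\int_0^1 s^{1/2-H}\bigl(1-(1/v)s\bigr)^{H-3/2}\,d(W^{(1)}-\tilde w^{(1)})_s;$$
for every $v\in[2,u]$ the parameter $v':=1/v$ lies in $(0,1/2]$, so the stochastic integral is dominated pointwise by the supremum $S$ featuring in the statement, giving $|\varphi_1(v)|^2\leq C_H\,v^{4H-4}\,S^2$.

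Plugging this in, the proof concludes by bounding $\int_2^u e^{-\alpha(u-v)}v^{4H-4}\,dv$ through a mild variant of Lemma \ref{lem:control_integral} (observe $v^{4H-4}\leq(v-1)^{4H-4}$ on $[2,u]$ since $4H-4<0$) to get an upper bound of order $u^{4H-4}$. Comparing with $\Psi_H(u,1)$, when $H>1/2$ the term $u^{4H-4}$ is already one of its two summands, while for $H<1/2$ the inequality $4H-4<2H-3$ ensures $u^{4H-4}\leq u^{2H-3}$ on $u\geq 2$; in both regimes the resulting bound fits into $C_H\,\Psi_H(u,1)\,S^2$. The only delicate step of the plan is therefore the scaling argument of the second paragraph, which replaces the failed integration by parts with a clean identification with the explicit supremum appearing in the lemma.
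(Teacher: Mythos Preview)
Your proposal is correct and follows essentially the same route as the paper: the same coercivity--Gronwall reduction to \eqref{eq:proof2}, the same scaling rewrite $\varphi_1(v)=c_H\,v^{2H-2}\int_0^1 s^{1/2-H}(1-s/v)^{H-3/2}\,d(W^{(1)}-\tilde w^{(1)})_s$, the same passage to the supremum over $1/v\in(0,1/2]$, and the same appeal to Lemma~\ref{lem:control_integral}. The only cosmetic difference is where the case split occurs: the paper observes directly that $v^{H-1/2}$ is bounded on $[2,\infty)$ when $H<1/2$, yielding the exponent $(4H-4)\vee(2H-3)$ in the integrand, whereas you carry $v^{4H-4}$ uniformly and recover the $H<1/2$ bound via $u^{4H-4}\leq u^{2H-3}$ at the end.
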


\begin{proof}
The proof begins as in the proof of Lemma \ref{lem:control_X_Xtilde_first_case}. We have through inequality \eqref{eq:proof2}:
\begin{equation}\label{eq:proof_case_2}
|X_{u}-\tilde{X}_{u}|^2\leqslant e^{-\alpha(u-2)}|X_2-\tilde{X}_2|^2+\frac{|\sigma|^2}{2\alpha}\int_{2}^ue^{-\alpha(u-v)}\left|\varphi_1(v)\right|^2{\rm d}v
\end{equation}
with 
\begin{align}
\varphi_1(v)&=c_Hv^{H-\frac{1}{2}}\int_0^1 s^{\frac{1}{2}-H}(v-s)^{H-\frac{3}{2}}{\rm d}(W^{(1)}-\tilde{w}^{(1)})_s\nonumber\\
&=c_Hv^{H-\frac{1}{2}}v^{H-\frac{3}{2}}\int_0^1 s^{\frac{1}{2}-H}\left(1-\frac{s}{v}\right)^{H-\frac{3}{2}}{\rm d}(W^{(1)}-\tilde{w}^{(1)})_s.
\end{align}
Since for $v\geqslant2$, $v^{H-\frac{1}{2}}$ is bounded when $H<1/2$, we have
\begin{align*}
\int_{2}^ue^{-\alpha(u-v)}\left|\varphi_1(v)\right|^2{\rm d}v&\leqslant c_H \int_{2}^ue^{-\alpha(u-v)} v^{(4H-4)\vee(2H-3)}\left|\int_0^1 s^{\frac{1}{2}-H}\left(1-\frac{s}{v}\right)^{H-\frac{3}{2}}{\rm d}(W^{(1)}-\tilde{w}^{(1)})_s\right|^2{\rm d}v\\
&\leqslant C_H\sup\limits_{v'\in[0,1/2]}\left|\int_0^1 s^{\frac{1}{2}-H}\left(1-v's\right)^{H-\frac{3}{2}}{\rm d}(W^{(1)}-\tilde{w}^{(1)})_s\right|^2\int_{2}^ue^{-\alpha(u-v)} v^{(4H-4)\vee(2H-3)}{\rm d}v.
\end{align*}
Then, we use Lemma \ref{lem:control_integral} in the previous inequality, which gives:
\begin{equation*}
\int_{2}^ue^{-\alpha(u-v)}\left|\varphi_1(v)\right|^2{\rm d}v\leqslant C_H\sup\limits_{v'\in[0,1/2]}\left|\int_0^1 s^{\frac{1}{2}-H}\left(1-v's\right)^{H-\frac{3}{2}}{\rm d}(W^{(1)}-\tilde{w}^{(1)})_s\right|^2(u-1)^{(4H-4)\vee(2H-3)}.
\end{equation*}
This inequality combined with \eqref{eq:proof_case_2} concludes the proof (we can replace $u-1$ by $u$, the inequality remains true when $u\geqslant2$ up to a constant).
\end{proof}

\subsection{Second case : $u\in[0,2]$}\label{subsection:second_case}
The idea here is to use Gronwall lemma in its integral form. By Hypothesis \ref{hyp:b-coercive}, $b$ is $L$-Lipschitz so that:
\begin{equation*}
|X_u-\tilde{X}_u|\leqslant L\int_{0}^u|X_s-\tilde{X}_s|{\rm d}s+\left|\int_0^{1\wedge u}K^{}_H(u+k-1,s+k-1){\rm d}(W^{(k)}-\tilde{w}^{(k)})_s\right|.
\end{equation*}
Then, for $u\in[0,2]$,
\begin{align}\label{eq:third_case}
|X_u-\tilde{X}_u|&\leqslant\left|\int_0^{1\wedge u}K^{}_H(u+k-1,s+k-1){\rm d}(W^{(k)}-\tilde{w}^{(k)})_s\right|\nonumber\\
&\quad\quad\quad+\int_0^u\left|\int_0^{1\wedge v}K^{}_H(v+k-1,s+k-1){\rm d}(W^{(k)}-\tilde{w}^{(k)})_s\right|e^{L(u-v)}{\rm d}v\nonumber\\
&\leqslant e^{2L}\sup\limits_{v\in[0,2]}\left|\int_0^{1\wedge v}K^{}_H(v+k-1,s+k-1){\rm d}(W^{(k)}-\tilde{w}^{(k)})_s\right|
\end{align}
For all $k\geqslant1$ and for all $v\in[0,2]$, we set 
\begin{equation}\label{def:G}
G_v^{(k)}(W-\tilde{w})=\int_0^{1\wedge v}K^{}_H(v+k-1,s+k-1){\rm d}(W^{(k)}-\tilde{w}^{(k)})_s.
\end{equation}
\bigskip

The inequality \eqref{eq:third_case} combined with Lemma \ref{lem:control_X_Xtilde_first_case} and Lemma \ref{lem:control_X_Xtilde_second_case} finally prove Proposition \ref{prop:control_X_Xtilde}.

\section{Conditional exponential moments of the martingale increments}\label{section:moments}
\subsection{Conditional moments of the martingale increments}\label{subsection:cond_moments}

\begin{propo}\label{prop:moments_martingale_inc}
\begin{itemize}
\item[(i)] There exists $C,\zeta>0$ such that for all $k\in\N^*$ and for all $p\geqslant2$,
\begin{equation}
\E[|M_k-M_{k-1}|^p|\mathcal{F}_{k-1}]^{1/p}\leqslant C\|F\|_{\rm Lip}\psi_{n,k}\left(\zeta^{p/2}p\Gamma\left(\frac{p}{2}\right)\right)^{1/p}\quad a.s.
\end{equation}
\item[(ii)]There exists $C,\zeta>0$ such that for all $k\in\N^*$ and for all $p\geqslant2$,
\begin{equation}
\E[|\tilde{M}_k-\tilde{M}_{k-1}|^p|\mathcal{F}_{k-1}]^{1/p}\leqslant C\|\tilde{F}\|_{\rm Lip}\psi'_{T,k}\left(\zeta^{p/2}p\Gamma\left(\frac{p}{2}\right)\right)^{1/p}\quad a.s.
\end{equation}
\end{itemize}
where $\psi_{n,k}:=\sum_{u=1}^{n-k+1}\sqrt{\Psi_H(u,k)}$, $~\psi'_{T,k}:=\int_{0}^{T-k+1}\sqrt{\Psi_H(u\vee1,k)}{\rm d}u~$ and $\Psi_H$ is defined in Proposition~\ref{prop:control_X_Xtilde}.

\end{propo}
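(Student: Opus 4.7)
The proof of both items rests on the same template; I focus on (i), since (ii) is identical up to replacing the sum $\sum_{t=k}^n$ by $\int_{k-1}^T {\rm d}t$ and $\psi_{n,k}$ by $\psi'_{T,k}$. Starting from the pathwise bound \eqref{eq:majo_M} and applying Proposition \ref{prop:control_X_Xtilde} term by term with $u = t - k + 1 \in \{1,\dots,n-k+1\}$ (for which $u\vee 1 = u$), the random factor
$$
S^{(k)}(W,\tilde w) := \sup_{v \in [0,1]} |W^{(k)}_v - \tilde w^{(k)}_v| + \sup_{v \in [0,1/2]} \left|\int_0^1 s^{\frac{1}{2}-H}(1-vs)^{H-\frac{3}{2}}\,{\rm d}(W^{(k)}-\tilde w^{(k)})_s\right| + \sup_{v \in [0,2]} |G_v^{(k)}|
$$
does not depend on the summation index, hence factors out:
$$
|M_k - M_{k-1}| \leq \|F\|_{\rm Lip} \, C_H \, \psi_{n,k} \int_\Omega S^{(k)}(W,\tilde w) \, \PP_W({\rm d}\tilde w).
$$

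I then take conditional $L^p$-norms with respect to $\F_{k-1}$ and use Minkowski's integral inequality to move $\|\cdot\|_{L^p(\F_{k-1})}$ inside $\int_\Omega \PP_W({\rm d}\tilde w)$. Since $W^{(k)}$ is independent of $\F_{k-1}$, the conditional norm equals an unconditional $L^p$-norm in the fresh Brownian increment $W^{(k)}$; a further triangle inequality in $L^p$ separates the $W^{(k)}$ and $\tilde w^{(k)}$ contributions, which play symmetric roles. The whole problem therefore reduces to uniform-in-$k$ $L^p$-moment bounds for the three suprema of centered Gaussian processes appearing in $S^{(k)}$.

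For each such supremum I invoke Borell--TIS: for a centered Gaussian supremum $Z$ with $\sigma_\ast^2 := \sup_t \E[Z_t^2]$, one has $\|Z\|_{L^p} \leq \E[Z] + C\sigma_\ast\sqrt{p}$. The first supremum is the maximum of a Brownian motion on $[0,1]$, which is universal. For the second, since $v \in [0,1/2]$ and $s \in [0,1]$ one has $1-vs \geq 1/2$, so the kernel $s^{1/2-H}(1-vs)^{H-3/2}$ avoids the singularity at $vs=1$ and belongs to $L^2({\rm d}s)$ for every $H \in (0,1)$. For the third, the arguments of $K_H(v+k-1,s+k-1)$ lie in $[k-1, k+1]$ and $\mathrm{Var}(G_v^{(k)})$ reduces to the variance of an fBm increment over an interval of length at most $2$, which is uniformly bounded in $k$. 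Together these give $\|A_i\|_{L^p} \leq C(1+\sqrt{p})$ with $C$ independent of $k$.

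The final step is to recast $C^p(1+\sqrt p)^p$ in the announced form $C\,\zeta^{p/2} p\,\Gamma(p/2)$: by Stirling, $p\,\Gamma(p/2) \geq c\,p^{p/2}$, so Gaussian moments of order $C^p p^{p/2}$ are dominated by $\tilde C \zeta^{p/2} p\, \Gamma(p/2)$ after adjusting $\zeta$, which yields the stated inequality. I expect the main difficulty to be the uniform-in-$k$ analysis of the third supremum: the Volterra kernel $K_H$ behaves qualitatively differently for $H < 1/2$ and $H > 1/2$, and its anisotropic structure interacts with the shift $k-1$; however, the variance reduction to an fBm increment on a bounded window bypasses these complications.
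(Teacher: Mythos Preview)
Your reduction---via \eqref{eq:majo_M}, Proposition~\ref{prop:control_X_Xtilde}, and independence of $W^{(k)}$ from $\F_{k-1}$---matches the paper's Lemma~\ref{lem:moments_martingale_inc}, and the final Stirling comparison is fine. For the three Gaussian suprema the paper goes through Appendices~\ref{section:sub_gaussian_BM} and~\ref{section:uniform_sub_gaussian_G}: for $G^{(k)}$ it establishes a uniform-in-$k$ H\"older increment bound (Proposition~\ref{prop:moment_2_increments_G}) by direct kernel estimates, then invokes Gaussian results from Friz--Victoir to obtain sub-Gaussian tails for $\|G^{(k)}\|_{\alpha'_H,[0,2]}$. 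Your Borell--TIS route is a legitimate alternative.

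There is, however, a gap in your treatment of $\sup_{v\in[0,2]}|G_v^{(k)}|$. Borell--TIS gives $\|Z\|_{L^p}\le \E[Z]+C\sigma_\ast\sqrt{p}$, and you argue only that $\sigma_\ast^2=\sup_v\mathrm{Var}(G_v^{(k)})$ is uniformly bounded in $k$ (correctly: $\int_{k-1}^{(1\wedge v)+k-1}K_H(v+k-1,s)^2\,{\rm d}s$ is one nonnegative summand in the Wiener-integral expression for $\mathrm{Var}(B_{v+k-1}-B_{k-1})=v^{2H}$). But bounding $\sigma_\ast$ alone says nothing about $\E[\sup_v|G_v^{(k)}|]$; for that you need the canonical metric $d_k(v,v'):=\E[|G_v^{(k)}-G_{v'}^{(k)}|^2]^{1/2}$ dominated, uniformly in $k$, by a function of $|v-v'|$ with finite Dudley entropy integral. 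Your closing sentence suggests the pointwise variance bound ``bypasses these complications''; it does not. The fix stays in your framework and is in fact cleaner than the paper's kernel computations: the same fBm-domination works for increments. For $0\le v'<v\le 1$,
\[
\E\bigl[|G_v^{(k)}-G_{v'}^{(k)}|^2\bigr]=\int_{k-1}^{v'+k-1}\!\bigl[K_H(v{+}k{-}1,s)-K_H(v'{+}k{-}1,s)\bigr]^2{\rm d}s+\int_{v'+k-1}^{v+k-1}\!K_H(v{+}k{-}1,s)^2\,{\rm d}s,
\]
and both terms are bounded by the corresponding pieces of $\mathrm{Var}(B_{v+k-1}-B_{v'+k-1})=(v-v')^{2H}$, since the first integral runs over a subinterval of $[0,v'+k-1]$. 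The remaining ranges of $v,v'$ are analogous. This gives $d_k(v,v')\le C|v-v'|^{H}$ uniformly in $k$, whence Dudley bounds $\E[\sup_v|G_v^{(k)}|]$ by a universal constant and Borell--TIS then delivers $\|A_3\|_{L^p}\le C(1+\sqrt{p})$ as you claimed.
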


To prove this result, we first need the following intermediate outcome.

\begin{lem}\label{lem:moments_martingale_inc}
For all $k\in\N^*$, let $G^{(k)}$ be defined by \eqref{def:G}. Then, for all $p\geqslant2$, there exists $C>0$ such that
\begin{align*}
&\E[|M_k-M_{k-1}|^p|\mathcal{F}_{k-1}]^{1/p}\\
&\leqslant3C\|F\|_{\rm Lip}\psi_{n,k}\left(\E\left[\sup\limits_{v\in[0,1]}|W^{(1)}_v-\tilde{W}^{(1)}_v|^p\right]^{1/p}
+\E\left[\sup\limits_{v\in[0,1/2]}\left|\int_0^1s^{\frac{1}{2}-H}\left(1-vs\right)^{H-\frac{3}{2}}{\rm d}(W^{(1)}-\tilde{W}^{(1)})_s\right|^p\right]^{1/p}\right.\\
&\quad\quad\quad\quad\quad\quad\quad\quad\quad\quad\quad\quad\left.+\E\left[\sup\limits_{v\in[0,2]}|G^{(k)}_v(W-\tilde{W})|^p\right]^{1/p}\right)\quad a.s.
\end{align*}
Or equivalently, since $W^{(k)}$ and $\tilde{W}^{(k)}$ are iid we can replace $W^{(k)}-\tilde{W}^{(k)}$ by $\sqrt{2}W^{(k)}$:
\begin{align*}
&\E[|M_k-M_{k-1}|^p|\mathcal{F}_{k-1}]^{1/p}\\
&\leqslant3C\sqrt{2}\|F\|_{\rm Lip}\psi_{n,k}\left(\E\left[\sup\limits_{v\in[0,1]}|W^{(1)}_v|^p\right]^{1/p}+\E\left[\sup\limits_{v\in[0,1/2]}\left|\int_0^1s^{\frac{1}{2}-H}\left(1-vs\right)^{H-\frac{3}{2}}{\rm d}W^{(1)}_s\right|^p\right]^{1/p}\right.\\
&\quad\quad\quad\quad\quad\quad\quad\quad\quad\quad\quad\quad\left.+\E\left[\sup\limits_{v\in[0,2]}|G^{(k)}_v(W)|^p\right]^{1/p}\right)\quad a.s.
\end{align*}
where $\psi_{n,k}=\sum_{u=1}^{n-k+1}\sqrt{\Psi_H(u,k)}$ and $\Psi_H$ is defined in Proposition \ref{prop:control_X_Xtilde}.\\
The same occurs for $\tilde{M}$ instead of $M$ by replacing $F$ by $\tilde{F}$ and $\psi_{n,k}$ by $\psi'_{T,k}=\int_{0}^{T-k+1}\sqrt{\Psi_H(u\vee1,k)}{\rm d}u$.
\end{lem}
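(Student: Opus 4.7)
The plan is to combine the pointwise estimate \eqref{eq:majo_M} with the trajectorial control of Proposition \ref{prop:control_X_Xtilde}, and then to take an $L^p(\F_{k-1})$ norm via a Fubini/Jensen argument exploiting the independence of $W^{(k)}$ from $\F_{k-1}$.

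\emph{First step.} In \eqref{eq:majo_M} I change variables $u=t-k+1$, so that the summation ranges over $u\in\{1,\dots,n-k+1\}$. By the very definitions of $X_u$ and $\tilde X_u$ (case $u\geqslant 1$), the summand inside the $\PP_W$-integral equals $|X_u-\tilde X_u|$. Applying Proposition \ref{prop:control_X_Xtilde} to each term, the upper bound factorises as $C_H\sqrt{\Psi_H(u\vee 1,k)}$ (the $u$-dependent factor) times a quantity $\Sigma(W,\tilde w)=A_1+A_2+A_3$ formed by the three suprema in the statement of that proposition, which does \emph{not} depend on $u$. Pulling $\Sigma$ out of the summation produces the prefactor $\psi_{n,k}=\sum_{u=1}^{n-k+1}\sqrt{\Psi_H(u\vee 1,k)}$ and leaves the pointwise bound
$$|M_k-M_{k-1}|\leqslant C_H\|F\|_{\rm Lip}\,\psi_{n,k}\int_\Omega \Sigma(W,\tilde w)\,\PP_W(d\tilde w).$$

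\emph{Second step.} I take the conditional $L^p(\F_{k-1})$ norm of both sides. Triangle inequality on $\Sigma=A_1+A_2+A_3$ splits the bound into three pieces, each of the form $\int_\Omega A_i(W,\tilde w)\PP_W(d\tilde w)$. On each piece I apply the conditional Jensen inequality to move the $L^p$ norm inside the $\PP_W$-integral:
$$\E\Bigl[\Bigl(\int_\Omega A_i(W,\tilde w)\PP_W(d\tilde w)\Bigr)^{\!p}\,\Big|\,\F_{k-1}\Bigr]^{1/p}\leqslant \Bigl(\int_\Omega \E[A_i(W,\tilde w)^p\mid \F_{k-1}]\,\PP_W(d\tilde w)\Bigr)^{1/p}.$$
Because $W^{(k)}_\cdot=W_{\cdot+k-1}-W_{k-1}$ is a standard Brownian motion independent of $\F_{k-1}$, the inner conditional expectation reduces to an ordinary expectation, and Fubini then replaces the deterministic trajectory $\tilde w^{(k)}$ (dummy variable of integration) by an independent Brownian motion $\tilde W^{(k)}$. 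For $A_1$ and $A_2$ the process $W^{(k)}-\tilde W^{(k)}$ only enters through its restriction to $[0,1]$, so stationarity of Brownian increments allows one to rename $(W^{(k)},\tilde W^{(k)})$ as $(W^{(1)},\tilde W^{(1)})$. For $A_3=\sup_{v\in[0,2]}|G_v^{(k)}|$ the $k$-dependence of the kernel $K_H(\cdot+k-1,\cdot+k-1)$ is intrinsic and must be kept. This yields the first announced inequality, with constant $3C$ coming from the three-term Minkowski split; the equivalent reformulation is obtained from $W^{(k)}-\tilde W^{(k)}\overset{d}{=}\sqrt{2}\,W^{(k)}$.

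\emph{Third step.} The bound on $|\tilde M_k-\tilde M_{k-1}|$ is obtained by exactly the same argument starting from \eqref{eq:majo_Mtilde}: the only change is that the finite sum $\sum_{u=1}^{n-k+1}$ is replaced by the integral $\int_0^{T-k+1}\!du$, so that the prefactor becomes $\psi'_{T,k}=\int_0^{T-k+1}\sqrt{\Psi_H(u\vee 1,k)}\,du$. The main technical point to be careful about is the chained Jensen/Fubini/Minkowski manipulation under the conditional expectation, and the identification of $\int_\Omega(\cdot)\PP_W(d\tilde w)$ with an expectation over an independent Brownian motion; the remaining manipulations are purely bookkeeping.
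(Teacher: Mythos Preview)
Your argument is correct and follows the same route as the paper: start from \eqref{eq:majo_M}, apply Proposition \ref{prop:control_X_Xtilde} to factor out $\psi_{n,k}$, then pass to conditional $L^p$ using Jensen together with the independence of $W^{(k)}$ from $\F_{k-1}$ and Fubini to identify the $\PP_W(d\tilde w)$-average with an expectation over an independent Brownian motion. The only cosmetic difference is the order of operations in the three-term split: the paper first uses $(a+b+c)^p\leqslant 3^{p-1}(a^p+b^p+c^p)$ and afterwards $(a+b+c)^{1/p}\leqslant a^{1/p}+b^{1/p}+c^{1/p}$, whereas you apply Minkowski in $L^p(\F_{k-1})$ directly; both yield the same constant $3C$.
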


\begin{proof}For the sake of simplicity, assume that $\|F\|_{\rm Lip}=1$. By inequality \eqref{eq:majo_M}, we have for all $p\geqslant2$,
\begin{equation*}
|M_k-M_{k-1}|^p\leqslant\left(\int_\Omega\sum_{u=1}^{n-k+1}|X_u-\tilde{X}_u|~\PP_W(d\tilde{w})\right)^p.
\end{equation*}
Now, we use Proposition \ref{prop:control_X_Xtilde} and for the sake of clarity we set $\|W^{(k)}-\tilde{w}^{(k)}\|_{\infty, [0,1]}:=\sup\limits_{v\in[0,1]}|W^{(k)}_v-\tilde{w}^{(k)}_v|$, $A(W^{(k)}-\tilde{w}^{(k)}):=\sup\limits_{v\in[0,1/2]}\left|\int_0^1s^{\frac{1}{2}-H}\left(1-vs\right)^{H-\frac{3}{2}}{\rm d}(W^{(k)}-\tilde{w}^{(k)})_s\right|$ and \\$C_k(W^{(k)}-\tilde{w}^{(k)}):=\sup\limits_{v\in[0,2]}|G^{(k)}_v(W-\tilde{w})|$. Then, by Jensen inequality,
\begin{align*}
&|M_k-M_{k-1}|^p\\
&\leqslant C^p\psi_{n,k}^p\left(\int_\Omega \|W^{(k)}-\tilde{w}^{(k)}\|_{\infty, [0,1]}+A(W^{(k)}-\tilde{w}^{(k)})+C_k(W^{(k)}-\tilde{w}^{(k)})~\PP_W(d\tilde{w})\right)^p\\
&\leqslant 3^{p-1}C^p\psi_{n,k}^p\left[\int_\Omega \|W^{(k)}-\tilde{w}^{(k)}\|_{\infty, [0,1]}^p~\PP_W(d\tilde{w})+\int_\Omega \left(A(W^{(k)}-\tilde{w}^{(k)})\right)^p~\PP_W(d\tilde{w})\right.\\
&\quad\quad\quad\quad\quad\quad\quad\quad\left.+\int_\Omega \left(C_k(W^{(k)}-\tilde{w}^{(k)})\right)^p~\PP_W(d\tilde{w})\right]
\end{align*}
Recall that $W^{(k)}=(W_{s+k-1}-W_{k-1})_{s\geq0}$ and thus $W^{(k)}$ is independent of $\mathcal{F}_{k-1}$.
Then,
\begin{align*}
&\E[|M_k-M_{k-1}|^p|\mathcal{F}_{k-1}]\\
&\leq 3^{p-1}C^p\psi_{n,k}^p\E\left[\int_\Omega \|W^{(k)}-\tilde{w}^{(k)}\|_{\infty, [0,1]}^p~\PP_W(d\tilde{w})+\int_\Omega \left(A(W^{(k)}-\tilde{w}^{(k)})\right)^p~\PP_W(d\tilde{w})\right.\\
&\quad\quad\quad\quad\quad\quad\quad\quad\left.+\left.\int_\Omega \left(C_k(W^{(k)}-\tilde{w}^{(k)})\right)^p~\PP_W(d\tilde{w})\right|\mathcal{F}_{k-1}\right]\\
&\leq 3^{p-1}C^p\psi_{n,k}^p\E\left[\int_\Omega \|W^{(k)}-\tilde{w}^{(k)}\|_{\infty, [0,1]}^p~\PP_W(d\tilde{w})+\int_\Omega \left(A(W^{(k)}-\tilde{w}^{(k)})\right)^p~\PP_W(d\tilde{w})\right.\\
&\quad\quad\quad\quad\quad\quad\quad\quad\left.+\int_\Omega \left(C_k(W^{(k)}-\tilde{w}^{(k)})\right)^p~\PP_W(d\tilde{w})\right]
\end{align*}
We denote by $\mathcal{F}^{(k)}$ the filtration associated to $W^{(k)}$, we rewrite
\begin{align*}
&\E\left[|M_k-M_{k-1}|^p|\mathcal{F}_{k-1}\right]\\
&\quad\leqslant 3^{p-1}C^p\psi_{n,k}^p\left(\E\left[\E\left[\left.\|W^{(k)}-\tilde{W}^{(k)}\|_{\infty, [0,1]}^p\right|\mathcal{F}_1^{(k)}\right]\right]+\E\left[\E\left[\left.\left(A(W^{(k)}-\tilde{W}^{(k)})\right)^p\right|\mathcal{F}_{1}^{(k)}\right]\right]\right.\\
&\quad\quad\quad\quad\quad\quad\quad\quad\quad\quad\quad\quad\left.+\E\left[\E\left[\left.\left(C_k(W^{(k)}-\tilde{W}^{(k)})\right)^p\right|\mathcal{F}_1^{(k)}\right]\right]\right)\\
&\quad\quad=3^{p-1}C^p\psi_{n,k}^p\left(\E\left[\|W^{(k)}-\tilde{W}^{(k)}\|_{\infty, [0,1]}^p\right]+\E\left[\left(A(W^{(k)}-\tilde{W}^{(k)})\right)^p\right]+\E\left[\left(C_k(W^{(k)}-\tilde{W}^{(k)})\right)^p\right]\right)\\
\end{align*}
Using the elementary inequality $(a+b)^{1/p}\leqslant a^{1/p}+b^{1/p}$, we finally get :
\begin{align*}
&\E\left[|M_k-M_{k-1}|^p|\mathcal{F}_{k-1}\right]^{1/p}\\
&\leqslant 3C\psi_{n,k}\left(\E\left[\|W^{(k)}-\tilde{W}^{(k)}\|_{\infty, [0,1]}^p\right]^{1/p}+\E\left[\left(A(W^{(1)}-\tilde{W}^{(1)})\right)^p\right]^{1/p}+\E\left[\left(C_k(W^{(k)}-\tilde{W}^{(k)})\right)^p\right]^{1/p}\right)
\end{align*}
and the proof is over since $W^{(k)}$ and $\tilde{W}^{(k)}$ have respectively the same distribution as $W^{(1)}$ and $\tilde{W}^{(1)}$.\\
In the same way, we prove the result for $\tilde{M}$ by using \eqref{eq:majo_Mtilde} which gives
\begin{equation*}
|\tilde{M}_k-\tilde{M}_{k-1}|^p\leqslant\left(\int_\Omega\int_{0}^{T-k+1}|X_u-\tilde{X}_u|~\PP_W(d\tilde{w})\right)^p
\end{equation*}
and Proposition \ref{prop:control_X_Xtilde}.
\end{proof}

\begin{proof}[Proof of Proposition \ref{prop:moments_martingale_inc}]
With Lemma \ref{lem:moments_martingale_inc} in hand, we just need to prove that there exist $\zeta>0$ such that for all $k\in\N^*$ and for all $p\geqslant2$
\begin{align}
\E\left[\sup\limits_{v\in[0,1]}|W^{(1)}_v|^p\right]^{1/p}&\leqslant \left(\zeta^{p/2}p\Gamma\left(\frac{p}{2}\right)\right)^{1/p},\label{eq:proof_sub_gaussian1}\\
\E\left[\sup\limits_{v\in[0,1/2]}\left|\int_0^1s^{\frac{1}{2}-H}\left(1-vs\right)^{H-\frac{3}{2}}{\rm d}W^{(1)}_s\right|^p\right]^{1/p}&\leqslant \left(\zeta^{p/2}p\Gamma\left(\frac{p}{2}\right)\right)^{1/p}\label{eq:proof_sub_gaussian2}\\
\text{and }\quad\E\left[\sup\limits_{v\in[0,2]}|G^{(k)}_v(W)|^p\right]^{1/p}&\leqslant\left(\zeta^{p/2}p\Gamma\left(\frac{p}{2}\right)\right)^{1/p}.\label{eq:proof_sub_gaussian3}
\end{align}
Condition \eqref{eq:proof_sub_gaussian1} is given in Appendix \ref{section:sub_gaussian_BM} and condition \eqref{eq:proof_sub_gaussian3} follows from Proposition \ref{prop:uniform_sub_gaussian_G} since $$\E\left[\sup\limits_{v\in[0,2]}|G^{(k)}_v(W)|^p\right]^{1/p}\leqslant2^{\alpha'_H}\E\left[\|G^{(k)}\|_{\alpha'_H,[0,2]}^p\right]^{1/p}$$
where $\alpha'_H\in(0,1)$ is defined in Proposition \ref{prop:uniform_sub_gaussian_G}.
Hence, it remains to get \eqref{eq:proof_sub_gaussian2}. To this end, we set for all $v\in[0,1/2]$, 
$$\tilde{G}_v:=\int_0^1s^{\frac{1}{2}-H}(1-vs)^{H-\frac{3}{2}}{\rm d}W^{(1)}_s.$$
Let $0\leqslant v'<v\leqslant 1/2$, we have
\begin{align*}
\E[|\tilde{G}_v-\tilde{G}_{v'}|^2]&=\int_0^1s^{1-2H}[(1-vs)^{H-\frac{3}{2}}-(1-v's)^{H-\frac{3}{2}}]^2{\rm d}s\\
&=\frac{1}{(3/2-H)^2}\int_0^1s^{1-2H}\left(\int_{v'}^v(1-us)^{H-\frac{5}{2}}{\rm d}u\right)^2{\rm d}s.
\end{align*}
Since for all $u\in[0,1/2]$ and for all $s\in[0,1]$ we have $\frac{1}{2}\leqslant1-us\leqslant1$, we deduce that 
\begin{align*}
\E[|\tilde{G}_v-\tilde{G}_{v'}|^2]\leqslant C_H(v-v')^2\int_{0}^1s^{1-2H}{\rm d}s=\frac{C_H}{2-2H}(v-v')^2.
\end{align*}
Hence, for all $\alpha\in(0,1)$, 
\begin{align}\label{eq:G_tilde}
\sup\limits_{0\leqslant v'<v\leqslant\frac{1}{2}}\frac{\E[|\tilde{G}_v-\tilde{G}_{v'}|^2]^{1/2}}{|v-v'|^\alpha}<+\infty
\end{align}
Now, following carefully the proof of Proposition \ref{prop:uniform_sub_gaussian_G} in Appendix \ref{section:uniform_sub_gaussian_G}, one can show that  \eqref{eq:G_tilde} and the fact that $\tilde{G}$ is a Gaussian process implies \eqref{eq:proof_sub_gaussian2} since for all $\alpha\in(0,1)$
$$\E\left[\sup\limits_{v\in[0,1/2]}|\tilde{G}_v|^p\right]^{1/p}\leqslant 2^{-\alpha}\E\left[\|\tilde{G}\|_{\alpha,[0,1/2]}^p\right]^{1/p}+\E[|\tilde{G}_0|^p]^{1/p}.$$
\end{proof}

\subsection{Proof of Proposition \ref{prop:exp_moment_F_Ftilde}}\label{subsection:proof_exp_moment}

We have the following result:
\begin{propo}\label{prop:exp_moments_martingale_inc}
\begin{itemize}
\item[(i)] There exists $C',\zeta>0$ such that for all $k\in\N^*$ and for all $\lambda>0$,
\begin{equation}
\E[\exp(\lambda(M_k-M_{k-1}))|\mathcal{F}_{k-1}]\leqslant \exp\left(2\lambda^2\|F\|^2_{\rm Lip}\psi^2_{n,k}C'\zeta\right)\quad a.s.
\end{equation}
\item[(ii)]There exists $C',\zeta>0$ such that for all $k\in\N^*$ and for all $\lambda>0$,
\begin{equation}
\E[\exp(\lambda(\tilde{M}_k-\tilde{M}_{k-1}))|\mathcal{F}_{k-1}]\leqslant \exp\left(2\lambda^2\|\tilde{F}\|^2_{\rm Lip}\psi'^2_{T,k}C'\zeta\right)\quad a.s.
\end{equation}
\end{itemize}
where $\psi_{n,k}:=\sum_{u=1}^{n-k+1}\sqrt{\Psi_H(u,k)}$, $~\psi'_{T,k}:=\int_{0}^{T-k+1}\sqrt{\Psi_H(u\vee1,k)}{\rm d}u~$ and $\Psi_H$ is defined in Proposition~\ref{prop:control_X_Xtilde}.

\end{propo}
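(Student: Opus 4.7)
The statement combines the two tools already at our disposal: the conditional $p$-moment bounds provided by Proposition \ref{prop:moments_martingale_inc} and the moment-to-exponential-moment conversion of Lemma \ref{lem:moments_to_exponential_moments}, applied conditionally on $\F_{k-1}$. Since $(M_k)$ is by construction an $(\F_k)$-martingale, the increment $M_k - M_{k-1}$ is centered given $\F_{k-1}$, which is precisely the hypothesis of the lemma.

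To match the form required by Lemma \ref{lem:moments_to_exponential_moments} exactly, I would raise the bound of Proposition \ref{prop:moments_martingale_inc}(i) to the $p$-th power: almost surely,
$$\E\bigl[|M_k - M_{k-1}|^p \,\big|\, \F_{k-1}\bigr] \le \tilde{\zeta}^{p/2}\,p\,\Gamma(p/2), \qquad \tilde{\zeta} := (C\|F\|_{\rm Lip}\psi_{n,k})^2\,\zeta.$$
This places the random variable $M_k - M_{k-1}$, viewed under $\PP(\,\cdot\,|\F_{k-1})$, in the setting of Lemma \ref{lem:moments_to_exponential_moments} with the lemma's constant equal to $1$ (hence $C'=1$) and $\zeta$ replaced by $\tilde{\zeta}$. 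The conclusion of the lemma, transposed to the conditional setting, then reads
$$\E\bigl[e^{\lambda(M_k-M_{k-1})}\,\big|\,\F_{k-1}\bigr] \le \exp(2\tilde{\zeta}\lambda^2) = \exp\bigl(2\lambda^2\,\|F\|^2_{\rm Lip}\psi^2_{n,k}\,C^2\zeta\bigr)\quad\text{a.s.},$$
which is item (i) with $C' := C^2$. Item (ii) is handled identically, starting from Proposition \ref{prop:moments_martingale_inc}(ii) and replacing $F$, $\psi_{n,k}$, $M$ by $\tilde{F}$, $\psi'_{T,k}$, $\tilde{M}$.

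The only point requiring a small amount of care, and what I would regard as the main (mild) obstacle, is that Lemma \ref{lem:moments_to_exponential_moments} is stated for an unconditional centered random variable; one has to transcribe its series-expansion proof to the conditional setting. This is essentially automatic: the conditional expectation $\E[\,\cdot\,|\F_{k-1}]$ commutes with the sum of the exponential series by conditional monotone convergence, the centering hypothesis is preserved by the martingale property, and the remaining estimates in the proof of the lemma are deterministic pointwise inequalities on the general term of the series. Thus the whole argument goes through almost surely and no new ingredient is required beyond what has already been developed in Sections \ref{section:sum_martingale} and \ref{section:moments}.
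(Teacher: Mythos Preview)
Your proof is correct and follows exactly the paper's approach: combine the centering $\E[M_k-M_{k-1}\mid\F_{k-1}]=0$ with the conditional moment bound of Proposition~\ref{prop:moments_martingale_inc} and apply Lemma~\ref{lem:moments_to_exponential_moments} conditionally. Your remark that the series-expansion proof of the lemma transposes verbatim to the conditional setting is the only thing the paper leaves implicit, and you handle it correctly.
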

\begin{proof}
Let us prove $(i)$. From $\E[M_k-M_{k-1}|\mathcal{F}_{k-1}]=0$ and Proposition \ref{prop:moments_martingale_inc}, we immediately get the result by using Lemma \ref{lem:moments_to_exponential_moments}.
\end{proof}
Let us now conclude the proof of Proposition \ref{prop:exp_moment_F_Ftilde} $(i)$. By the decomposition \eqref{eq:decomposition_sum_martingale_inc} and Proposition \ref{prop:exp_moments_martingale_inc} $(i)$, we have the following recursive inequality :
$$\E\left[e^{\lambda M_n}\right]=\E\left[e^{\lambda M_{n-1}}\E\left[\left.e^{\lambda(M_n-M_{n-1})}\right|\mathcal{F}_{n-1}\right]\right]\leq \exp\left(2\lambda^2\|F\|^2_{\rm Lip}\psi^2_{n,n}C'\zeta\right)\E\left[e^{\lambda M_{n-1}}\right]$$
which gives 
\begin{equation}\label{eq:almost_last}
\E\left[e^{\lambda M_n}\right]\leq \exp\left(2\lambda^2\|F\|^2_{\rm Lip}C'\zeta\sum_{k=1}^{n}\psi^2_{n,k}\right).
\end{equation}
Equation \eqref{eq:almost_last} combined with Lemma \ref{lem:estimate} (see below) finally proves Proposition \ref{prop:exp_moment_F_Ftilde} $(i)$. The proof of item $(ii)$ is exactly the same. 
\begin{lem} \label{lem:estimate}
\begin{itemize}
\item[(i)] Let $n\in\N^*$ and $(\psi_{n,k})$ be defined as in Proposition \ref{prop:moments_martingale_inc}. There exists $C_H>0$ such that 
\begin{align*}
\sum_{k=1}^n\psi^2_{n,k}\leqslant C_H ~n^{2\left(H\vee\frac{1}{2}\right)}.
\end{align*}
\item[(ii)] Let $T\geqslant1$ and $(\psi'_{T,k})$ be defined as in Proposition \ref{prop:moments_martingale_inc}. There exists $C_H>0$ such that 
\begin{align*}
\sum_{k=1}^{\lceil T\rceil}\psi'^2_{T,k}\leqslant C_H ~T^{2\left(H\vee\frac{1}{2}\right)}.
\end{align*}
\end{itemize}
\end{lem}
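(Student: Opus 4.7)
My plan is to treat the two regimes $H\in(0,1/2)$ and $H\in(1/2,1)$ separately, since these correspond to the two branches in the definition of $\Psi_H$. In both cases I split $\sqrt{\Psi_H(u,k)}$ into pure power terms via $\sqrt{a+b}\le\sqrt a+\sqrt b$, then reduce $\psi_{n,k}$ and $\psi'_{T,k}$ to classical estimates on the partial sums or integrals of $u\mapsto u^\gamma$ with $\gamma\in\{H-3/2,\,2H-2\}$. The target exponent $2(H\vee 1/2)$ equals $1$ when $H<1/2$ and $2H$ when $H>1/2$, which matches exactly the two regimes.

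In the case $H<1/2$, $\sqrt{\Psi_H(u,k)}=\sqrt{C'_H}\,u^{H-3/2}$ with exponent $H-3/2<-1$, so $\sum_{u=1}^\infty u^{H-3/2}<\infty$. This gives $\psi_{n,k}\le C_H$ uniformly in $k$, hence $\sum_{k=1}^n\psi_{n,k}^2\le C_H n$, which is precisely $n^{2(H\vee 1/2)}$. The integral version is identical: $\int_0^1\sqrt{\Psi_H(1,k)}\,du=\sqrt{C'_H}$ and $\int_1^\infty u^{H-3/2}\,du<\infty$ give $\psi'_{T,k}\le C_H$ and thus $\sum_{k=1}^{\lceil T\rceil}{\psi'_{T,k}}^2\le C_H T$.

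In the case $H>1/2$ I use
\[
\sqrt{\Psi_H(u,k)}\le\sqrt{C'_H}\bigl(k^{(1-2H)/2}\,u^{2H-2}+u^{H-3/2}\bigr).
\]
The $u^{H-3/2}$ piece contributes $O(1)$ to $\psi_{n,k}$ as before. For the first piece, since $2H-2\in(-1,0)$, the standard integral comparison gives $\sum_{u=1}^{N}u^{2H-2}\le C_H N^{2H-1}$, so
\[
\psi_{n,k}\le C_H\bigl(k^{(1-2H)/2}(n-k+1)^{2H-1}+1\bigr),
\]
and hence $\psi_{n,k}^2\le C_H\bigl(k^{1-2H}(n-k+1)^{4H-2}+1\bigr)$. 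The main summation to control is then
\[
S_n:=\sum_{k=1}^n k^{1-2H}(n-k+1)^{4H-2}.
\]
Since $4H-2>0$ one has $(n-k+1)^{4H-2}\le n^{4H-2}$ uniformly, and since $1-2H>-1$, $\sum_{k=1}^n k^{1-2H}\le C_H n^{2-2H}$. Multiplying yields $S_n\le C_H n^{2H}$, i.e. the announced $O(n^{2(H\vee 1/2)})$ bound.

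For part (ii) the same split and the same estimates apply with the discrete sum $\sum_{u=1}^{n-k+1}$ replaced by the integral $\int_0^{T-k+1}$; the slice $u\in[0,1]$ is handled separately using $\Psi_H(u\vee 1,k)=\Psi_H(1,k)=O(1)$, and on $[1,T-k+1]$ one uses $\int_1^{M}u^{2H-2}\,du\le C_H M^{2H-1}$ together with $\int_1^{\infty}u^{H-3/2}\,du<\infty$. This gives $\psi'_{T,k}\le C_H\bigl(k^{(1-2H)/2}(T-k+1)^{2H-1}+1\bigr)$, and summing over $k\in\{1,\dots,\lceil T\rceil\}$ reproduces the convolution estimate on $S_{\lceil T\rceil}$, yielding $C_H T^{2H}$. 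The only mildly non-routine step is the convolution-type bound on $S_n$; here the crude replacement $(n-k+1)^{4H-2}\le n^{4H-2}$ is already sharp up to constants, because the exact Beta-type asymptotics of this convolution still produces growth of order $n^{2H}$.
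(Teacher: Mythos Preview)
Your overall approach matches the paper's, but there is one genuine slip in the $H>1/2$ case. You write ``the $u^{H-3/2}$ piece contributes $O(1)$ to $\psi_{n,k}$ as before,'' invoking the summability argument from the $H<1/2$ regime. That argument relied on $H-\tfrac32<-1$, which fails once $H>1/2$: now $H-\tfrac32\in(-1,-\tfrac12)$ and $\sum_{u\ge 1}u^{H-3/2}=+\infty$. The correct estimate (and the one the paper uses) is the integral comparison
\[
\sum_{u=1}^{N}u^{H-3/2}\le \int_0^{N}t^{H-3/2}\,{\rm d}t=\frac{1}{H-1/2}\,N^{H-1/2},
\]
so this piece contributes $C_H(n-k+1)^{H-1/2}$ to $\psi_{n,k}$, not $O(1)$. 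After squaring and summing over $k$ you pick up the extra term $\sum_{k=1}^n(n-k+1)^{2H-1}\le C_H\, n^{2H}$, which is still within the target bound, so the fix is local and the rest of your argument survives unchanged. The same correction is needed in part~(ii).

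One place where your route is actually tidier than the paper's: for the convolution sum $S_n=\sum_{k=1}^n k^{1-2H}(n-k+1)^{4H-2}$ you use the crude bound $(n-k+1)^{4H-2}\le n^{4H-2}$ (valid since $4H-2>0$) together with $\sum_{k=1}^n k^{1-2H}\le C_H\, n^{2-2H}$, obtaining $S_n\le C_H\, n^{2H}$ in one line. The paper instead rewrites $S_n$ as a Riemann sum and invokes convergence to the Beta integral $\int_0^1 x^{1-2H}(1-x)^{4H-2}\,{\rm d}x$; both give the same order, but your estimate is more direct.
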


\begin{proof}
$(i)$ Recall that $\psi_{n,k}=\sum_{u=1}^{n-k+1}\sqrt{\Psi_H(u,k)}$ with
$$ \Psi_H(u,k):=C_H\left\{\begin{array}{lll}u^{2H-3}& \text{if} &H\in(0,1/2)\\
k^{1-2H}u^{4H-4}+u^{2H-3}& \text{if} &H\in(1/2,1)\end{array}\right.$$
and $C_H>0$. \\
$\rhd$ First case: $H\in(0,1/2)$. We have
$$\sum_{u=1}^{n-k+1}u^{H-\frac{3}{2}}\leqslant\sum_{u=1}^{+\infty}u^{H-\frac{3}{2}}<+\infty.$$
Then, 
$$\sum_{k=1}^n\psi^2_{n,k}\leqslant C_H ~n$$
which concludes the proof for $H\in(0,1/2)$.\\

$\rhd$ Second case: $H\in(1/2,1)$. We have
$$\sum_{u=1}^{n-k+1}u^{H-\frac{3}{2}}\leqslant\int_{0}^{n-k+1}t^{H-\frac{3}{2}}{\rm d}t=\frac{1}{H-1/2}(n-k+1)^{H-\frac{1}{2}}$$
and 
$$\sum_{u=1}^{n-k+1}u^{2H-2}\leqslant\int_{0}^{n-k+1}t^{2H-2}{\rm d}t=\frac{1}{2H-1}(n-k+1)^{2H-1}.$$
Then, 
\begin{align*}
\sum_{k=1}^n\psi^2_{n,k}&\leqslant C_{1,H}\sum_{k=1}^n(n-k+1)^{2H-1}+C_{2,H}\sum_{k=1}^nk^{1-2H}(n-k+1)^{4H-2}\\
&\leqslant C_{1,H}~n^{2H}+C_{2,H}(n+1)^{2H}\frac{1}{n+1}\sum_{k=1}^{n+1}\left(\frac{k}{n+1}\right)^{1-2H}\left(1-\frac{k}{n+1}\right)^{4H-2}.
\end{align*}
Since
$$\frac{1}{n+1}\sum_{k=1}^{n+1}\left(\frac{k}{n+1}\right)^{1-2H}\left(1-\frac{k}{n+1}\right)^{4H-2}\underset{n\to+\infty}{\longrightarrow}\int_0^1x^{1-2H}(1-x)^{4H-2}{\rm d}x<+\infty$$
we finally get the result when $H\in(1/2,1)$.\\

$(ii)$ Recall that $\psi'_{T,k}=\int_{0}^{T-k+1}\sqrt{\Psi_H(u\vee1,k)}{\rm d}u$.\\
$\rhd$ First case: $H\in(0,1/2)$. We have
$$\int_0^{T-k+1}(u\vee1)^{H-\frac{3}{2}}{\rm d}u\leqslant 1+\int_{1}^{+\infty}u^{H-\frac{3}{2}}{\rm d}u<+\infty.$$
Then, 
$$\sum_{k=1}^{\lceil T\rceil}\psi'^2_{T,k}\leqslant C_H ~\lceil T\rceil\leqslant\tilde{C}_H ~T$$
which concludes the proof for $H\in(0,1/2)$.\\

$\rhd$ Second case: $H\in(1/2,1)$. We have
$$\int_{0}^{T-k+1}(u\vee1)^{H-\frac{3}{2}}{\rm d}u=1+\frac{1}{H-1/2}[(T-k+1)^{H-1/2}-1]\leqslant\frac{1}{H-1/2}(T-k+1)^{H-\frac{1}{2}}$$
and 
$$\int_{0}^{T-k+1}(u\vee1)^{2H-2}{\rm d}u=1+\frac{1}{2H-1}[(T-k+1)^{2H-1}-1]\leqslant\frac{1}{2H-1}(T-k+1)^{2H-1}.$$
Then, 
\begin{align*}
\sum_{k=1}^{\lceil T\rceil}\psi'^2_{T,k}&\leqslant C_{1,H}\sum_{k=1}^{\lceil T\rceil}(T-k+1)^{2H-1}+C_{2,H}\sum_{k=1}^{\lceil T\rceil}k^{1-2H}(T-k+1)^{4H-2}\\
&\leqslant C_{1,H}~\lceil T\rceil T^{2H-1}+C_{2,H}(\lceil T\rceil+1)^{2H}\frac{1}{\lceil T\rceil+1}\sum_{k=1}^{\lceil T\rceil+1}\left(\frac{k}{\lceil T\rceil+1}\right)^{1-2H}\left(1-\frac{k}{\lceil T\rceil+1}\right)^{4H-2}.
\end{align*}
Since
$$\frac{1}{\lceil T\rceil+1}\sum_{k=1}^{\lceil T\rceil+1}\left(\frac{k}{\lceil T\rceil+1}\right)^{1-2H}\left(1-\frac{k}{\lceil T\rceil+1}\right)^{4H-2}\underset{T\to+\infty}{\longrightarrow}\int_0^1x^{1-2H}(1-x)^{4H-2}{\rm d}x<+\infty$$
we finally get the result when $H\in(1/2,1)$.
\end{proof}

\appendix
\section{Sub-Gaussianity of the supremum of the Brownian motion}\label{section:sub_gaussian_BM}
\begin{propo}\label{prop:sub_gaussian_sup_BM}
Let $(W_t)_{t\geqslant0}$ be a $d$-dimensional standard Brownian motion. There exist $\eta,\eta'>0$ such that
\begin{equation}\label{prop:condition1_sub_gaussian_sup_BM}
\forall x\geqslant0,\quad \PP\left(\sup\limits_{t\in[0,1]}|W_t|>x\right)\leqslant \eta'e^{-\eta x^2}
\end{equation}
Consequently, for all $p\geqslant2$,
\begin{equation}\label{prop:condition2_sub_gaussian_sup_BM}
\E\left[\sup\limits_{t\in[0,1]}|W_t|^p\right]\leqslant\frac{\eta'}{2}\left(\frac{1}{\eta}\right)^{p/2}p\Gamma\left(\frac{p}{2}\right)
\end{equation}
where $\Gamma(x):=\int_0^{+\infty}e^{-u}u^{x-1}{\rm d}u$.
\end{propo}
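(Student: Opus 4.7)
The plan is to prove inequality \eqref{prop:condition1_sub_gaussian_sup_BM} first and then deduce \eqref{prop:condition2_sub_gaussian_sup_BM} from it via the layer cake formula. Since only the tail bound is a probabilistic statement, this reduces the work to one genuine step followed by a deterministic change of variables.

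For \eqref{prop:condition1_sub_gaussian_sup_BM}, the idea is to reduce the $d$-dimensional estimate to a one-dimensional one. Writing $W_t=(W_t^{(1)},\dots,W_t^{(d)})$ with independent scalar Brownian components, I would use the elementary bound $|W_t|\leqslant\sqrt{d}\,\max_{1\leqslant i\leqslant d}|W_t^{(i)}|$, which yields
\[
\PP\!\left(\sup_{t\in[0,1]}|W_t|>x\right)\leqslant \sum_{i=1}^d \PP\!\left(\sup_{t\in[0,1]}|W_t^{(i)}|>\frac{x}{\sqrt{d}}\right).
\]
On each scalar component, I would apply the reflection principle together with $\PP(\sup_{t\in[0,1]}|W_t^{(i)}|>y)\leqslant 2\PP(\sup_{t\in[0,1]}W_t^{(i)}>y)=4\PP(W_1^{(i)}>y)$ and the standard Gaussian tail bound $\PP(W_1^{(i)}>y)\leqslant e^{-y^2/2}$. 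Combining these gives \eqref{prop:condition1_sub_gaussian_sup_BM} with explicit constants $\eta'=4d$ and $\eta=1/(2d)$.

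For \eqref{prop:condition2_sub_gaussian_sup_BM}, set $S:=\sup_{t\in[0,1]}|W_t|$ and use the layer cake identity $\E[S^p]=\int_0^{+\infty}px^{p-1}\PP(S>x)\,{\rm d}x$. Plugging in the tail bound gives
\[
\E[S^p]\leqslant p\eta'\int_0^{+\infty}x^{p-1}e^{-\eta x^2}\,{\rm d}x.
\]
Performing the substitution $u=\eta x^2$ transforms the right-hand side into $\tfrac{p\eta'}{2}\eta^{-p/2}\int_0^{+\infty}u^{p/2-1}e^{-u}\,{\rm d}u=\tfrac{\eta'}{2}(1/\eta)^{p/2}p\,\Gamma(p/2)$, which is exactly the claimed estimate.

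No step presents a real obstacle: the reduction from dimension $d$ to dimension $1$ is elementary, the reflection principle and the Gaussian tail bound are classical, and the final computation is a textbook substitution leading to the Gamma function. The only point requiring some care is to keep track of the constants $\eta,\eta'$ (which depend on $d$) consistently through the two inequalities, so that the same pair can be used in both \eqref{prop:condition1_sub_gaussian_sup_BM} and \eqref{prop:condition2_sub_gaussian_sup_BM}.
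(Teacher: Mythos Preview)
Your proposal is correct and follows essentially the same route as the paper: reduce to one-dimensional components via a union bound, apply the reflection principle and the Gaussian tail estimate to get \eqref{prop:condition1_sub_gaussian_sup_BM}, and then integrate the tail via the layer cake formula with the substitution $u=\eta x^2$ to obtain \eqref{prop:condition2_sub_gaussian_sup_BM}. The only cosmetic difference is that you bound $|W_t|\leqslant\sqrt{d}\,\max_i|W_t^{(i)}|$ whereas the paper uses $|W_t|\leqslant\sum_i|W_t^{i}|$, which merely changes the explicit values of $\eta,\eta'$.
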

\begin{proof}
\begin{align*}
\sup\limits_{t\in[0,1]}|W_t|
=\sup\limits_{t\in[0,1]}\left(\sum_{i=1}^d|W^i_t|^2\right)^{1/2}
=\left(\sup\limits_{t\in[0,1]}\sum_{i=1}^d|W^i_t|^2\right)^{1/2}
\leqslant\left(\sum_{i=1}^d\sup\limits_{t\in[0,1]}|W^i_t|^2\right)^{1/2}
\leqslant\sum_{i=1}^d\sup\limits_{t\in[0,1]}|W^i_t|.
\end{align*}
Therefore for all $x\geqslant0$, we have
$$\PP\left(\sup\limits_{t\in[0,1]}|W_t|\geqslant x\right)\leqslant\PP\left(\sum_{i=1}^d\sup\limits_{t\in[0,1]}|W^i_t|\geqslant x\right)\leqslant\sum_{i=1}^d\PP\left(\sup\limits_{t\in[0,1]}|W^i_t|\geqslant x\right)=d\times\PP\left(\sup\limits_{t\in[0,1]}|W^1_t|\geqslant x\right).$$
Since $\sup\limits_{t\in[0,1]}|W^1_t|=\max\left(\sup\limits_{t\in[0,1]}(-W^1_t),~\sup\limits_{t\in[0,1]}W^1_t\right)~$ and $~(W^1_t)_{t\geqslant0}\overset{\mathcal{L}}{=}(-W^1_t)_{t\geqslant0}$, we have $$\PP\left(\sup\limits_{t\in[0,1]}|W_t|\geqslant x\right)\leqslant d\left(\PP\left(\sup\limits_{t\in[0,1]}(-W^1_t)\geqslant x\right)+\PP\left(\sup\limits_{t\in[0,1]}W^1_t\geqslant x\right)\right)= 2d\times\PP\left(\sup\limits_{t\in[0,1]}W^1_t\geqslant x\right).$$
By the reflection principle, we know that $\PP\left(\sup\limits_{t\in[0,1]}W^1_t\geqslant x\right)=2\PP(W^1_1\geqslant x)$ which induces finally that
\begin{equation}
\PP\left(\sup\limits_{t\in[0,1]}|W_t|\geqslant x\right)\leqslant 4d~\PP(W^1_1\geqslant x)=\frac{4d}{\sqrt{2\pi}}\int_{x}^{+\infty}e^{-\frac{1}{2}s^2}{\rm d}s\leqslant C_d e^{-\frac{1}{4}x^2}.
\end{equation}
Then, \eqref{prop:condition2_sub_gaussian_sup_BM} follows from \eqref{prop:condition1_sub_gaussian_sup_BM} by using the formula $\E[X]=\int_0^{+\infty}\PP(X>x){\rm d}x$ for non-negative random variables and a simple change of variable.
\end{proof}

\section{Uniform sub-Gaussianity of $\|G^{(k)}\|_{\alpha,[0,2]}$}\label{section:uniform_sub_gaussian_G}
In this section, we consider the following Gaussian processes: for all $k\in\N^*$,
\begin{equation}
\forall v\in[0,2],\quad G^{(k)}_v:=\int_{0}^{1\wedge v}K^{}_H(v+k-1,s+k-1){\rm d}W_s
\end{equation}
where $(W_t)_{t\in[0,T]}$ is a $d$-dimensional Brownian motion and $K_H$ is defined by \eqref{eq:kernel_K_H}.
\begin{rem} Since we are interested in the law of $G^{(k)}$, we have replaced $W^{(k)}$ by $W$ in the expression of $G^{(k)}$ given by \eqref{def:G}.
\end{rem}

First, we have the following control on the second moment of $G^{(k)}$-increments.

\begin{propo}\label{prop:moment_2_increments_G}
There exists $C_H>0$ such that for all $k\in\N^*$ and for all $0\leqslant v'<v\leqslant2$,
\begin{equation}
\E\left[\left|G^{(k)}_v-G^{(k)}_{v'}\right|^2\right]\leqslant C_H|v-v'|^{2\alpha^{}_H}
\end{equation}
with $\alpha^{}_H:=\left\{\begin{array}{lll}
H & \text{if} & H<1/2\\
\frac{H}{2} & \text{if} & H>1/2
\end{array}\right.$.
\end{propo}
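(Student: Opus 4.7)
The plan is to compute $\E[|G^{(k)}_v - G^{(k)}_{v'}|^2]$ directly through Itô's isometry and reduce everything to the classical identity for fBm increments. Specifically, since the fBm $B_T = \int_0^T K^{}_H(T,s){\rm d}W_s$ satisfies $\E[(B_T-B_{T'})^2] = |T-T'|^{2H}$, we have for all $T > T' \geq 0$
$$|T-T'|^{2H} = \int_0^{T'}[K^{}_H(T,s) - K^{}_H(T',s)]^2{\rm d}s + \int_{T'}^T K^{}_H(T,s)^2{\rm d}s,$$
so each of the two non-negative summands on the right is individually bounded by $|T-T'|^{2H}$. This is essentially the only fact about the kernel I will need.

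Setting $T := v+k-1$ and $T' := v'+k-1$ (so that $T-T' = v-v'$) and performing the change of variable $u = s+k-1$ in the Itô isometry expression for the variance, I split into three cases depending on the positions of $v'$ and $v$ relative to $1$. If $v' < v \leq 1$, the variance equals $\int_{k-1}^{v'+k-1}[K^{}_H(T,u)-K^{}_H(T',u)]^2{\rm d}u + \int_{v'+k-1}^{v+k-1}K^{}_H(T,u)^2{\rm d}u$; if $1 \leq v' < v$, only $\int_{k-1}^{k}[K^{}_H(T,u)-K^{}_H(T',u)]^2{\rm d}u$ survives; and if $v' \leq 1 \leq v$, the variance becomes $\int_{k-1}^{v'+k-1}[K^{}_H(T,u)-K^{}_H(T',u)]^2{\rm d}u + \int_{v'+k-1}^{k}K^{}_H(T,u)^2{\rm d}u$. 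In each case the integration intervals are subsets of those appearing in the fBm identity above: the inclusion $[k-1,k] \subseteq [0,v'+k-1]$ holds in the second case because $v' \geq 1$, and $[v'+k-1,k] \subseteq [v'+k-1,v+k-1]$ in the third case because $v \geq 1$. Hence each summand is bounded by $|v-v'|^{2H}$ and we obtain $\E[|G^{(k)}_v-G^{(k)}_{v'}|^2] \leq 2|v-v'|^{2H}$, uniformly in $k$.

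To conclude, since $|v-v'| \leq 2$, the trivial inequality $|v-v'|^{2H} \leq 2^{2H-2\alpha^{}_H}|v-v'|^{2\alpha^{}_H}$ yields the claim with $\alpha^{}_H = H$ when $H<1/2$ and $\alpha^{}_H = H/2$ when $H>1/2$. The only real obstacle is the bookkeeping: careful tracking of the change of variable $u = s+k-1$ and verifying that each truncated interval after substitution really does sit inside one of the two intervals appearing in the fBm identity. Once this is handled the estimate follows at no further cost, and crucially the resulting constant is uniform in $k$, which is exactly what is needed downstream.
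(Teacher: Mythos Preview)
Your argument is correct and is genuinely simpler than the paper's. By recognising that the integrands in the It\^o isometry for $G^{(k)}_v-G^{(k)}_{v'}$ are exactly those appearing in the variance computation for the fBm increment $B_T-B_{T'}$ (with $T=v+k-1$, $T'=v'+k-1$), and that in each of your three cases the domains of integration are subintervals of $[0,T']$ and $[T',T]$ respectively, you obtain $\E[|G^{(k)}_v-G^{(k)}_{v'}|^2]\le 2|v-v'|^{2H}$ uniformly in $k$ with essentially no computation. The only minor omission is that $W$ is $d$-dimensional, so the fBm identity reads $d|T-T'|^{2H}$; this just adds a harmless factor of $d$.

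The paper proceeds quite differently. It splits into $k=1$ and $k>1$; for $k>1$ it writes the kernel difference via $\partial_u K_H$, bounds the factor $\big(\tfrac{u+k-1}{s+k-1}\big)^{H-1/2}$ by a constant uniform in $k>1$, and is then left estimating $\int_0^{v'}[(v-s)^{H-1/2}-(v'-s)^{H-1/2}]^2{\rm d}s$ by a separate lemma, together with an appeal to a pointwise bound on $K_H$ from Decreusefond--\"Ust\"unel for the second integral $I_2$. This route is longer and in fact loses regularity: for $H>1/2$ the paper only obtains the exponent $H$ (hence $\alpha_H=H/2$), whereas your argument gives the full exponent $2H$ in all cases before you voluntarily weaken it. So your approach is not only shorter but sharper; the paper's weaker exponent suffices downstream, but yours would work just as well (indeed better) in Proposition~\ref{prop:uniform_sub_gaussian_G}.
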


\begin{proof}Let $0\leqslant v'<v\leqslant2$. Then,
\begin{align}
&G_v^{(k)}-G_{v'}^{(k)}\nonumber\\
&=\int_0^{1\wedge v'}K^{}_H(v+k-1,s+k-1)-K^{}_H(v'+k-1,s+k-1){\rm d}W_s + \int_{1\wedge v'}^{1\wedge v}K^{}_H(v+k-1,s+k-1){\rm d}W_s\nonumber\\
&=\int_0^{1\wedge v'}\left(\int_{v'}^v\frac{\partial}{\partial u}K^{}_H(u+k-1,s+k-1){\rm d}u\right){\rm d}W_s + \int_{1\wedge v'}^{1\wedge v}K^{}_H(v+k-1,s+k-1){\rm d}W_s
\end{align}
with
\begin{equation}\label{eq:first_deriv_Kernel}
\frac{\partial}{\partial u}K^{}_H(u+k-1,s+k-1)=c_H\left(\frac{u+k-1}{s+k-1}\right)^{H-\frac{1}{2}}(u-s)^{H-\frac{3}{2}}.
\end{equation}
Then, we deduce the following expression for the moment of order $2$:
\begin{align}\label{eq:moment_2_increments_G}
&\E\left[|G^{(k)}_v-G^{(k)}_{v'}|^2\right]\nonumber\\
&\quad=\int_0^{1\wedge v'}\left(\int_{v'}^v\frac{\partial}{\partial u}K^{}_H(u+k-1,s+k-1){\rm d}u\right)^2{\rm d}s + \int_{1\wedge v'}^{1\wedge v}K^{}_H(v+k-1,s+k-1)^2{\rm d}s\nonumber\\
&\quad=:I_1(v,v')+I_2(v,v').
\end{align}
Now, let us distinguish the two cases: $k>1$ and $k=1$:\\

$\rhd$ \underline{First case: $k>1$}\\

We begin with the first integral in \eqref{eq:moment_2_increments_G}, namely $I_1(v,v')$: let us note that in the expression \eqref{eq:first_deriv_Kernel}
$$\sup\limits_{k>1}~\sup\limits_{u,s\in[0,2]}\left(\frac{u+k-1}{s+k-1}\right)^{H-\frac{1}{2}}<+\infty.$$
Hence,
\begin{align}\label{eq:I_1_v_v'}
I_1(v,v')&\leqslant C_H\int_0^{1\wedge v'}\left(\int_{v'}^v(u-s)^{H-\frac{3}{2}}{\rm d}u\right)^2{\rm d}s\nonumber\\
&=\frac{C_H}{(H-1/2)^2}\int_0^{1\wedge v'}\left[(v-s)^{H-\frac{1}{2}}-(v'-s)^{H-\frac{1}{2}}\right]^2{\rm d}s\nonumber\\
&\leqslant\frac{C_H}{(H-1/2)^2}\int_0^{v'}\left[(v-s)^{H-\frac{1}{2}}-(v'-s)^{H-\frac{1}{2}}\right]^2{\rm d}s\nonumber\\
&\leqslant C'_H\left\{\begin{array}{lll}
(v-v')^{2H} & \text{if} & H<1/2\\
(v-v')^H & \text{if} & H>1/2
\end{array}\right..
\end{align}
and the last inequality is given by the following estimate:
\begin{lem}There exists $\tilde{C}_H>0$ such that for all $0\leqslant v'< v\leqslant2$,
$$\int_0^{v'}\left[(v-s)^{H-\frac{1}{2}}-(v'-s)^{H-\frac{1}{2}}\right]^2{\rm d}s\leqslant \tilde{C}_H\left\{\begin{array}{lll}
(v-v')^{2H} & \text{if} & H<1/2\\
(v-v')^H & \text{if} & H>1/2
\end{array}\right..$$
\end{lem}
\begin{proof}First, we easily have 
\begin{align*}
\int_0^{v'}\left[(v-s)^{H-\frac{1}{2}}-(v'-s)^{H-\frac{1}{2}}\right]^2{\rm d}s=\frac{1}{2H}\left[v^{2H}+v'^{~2H}-(v-v')^{2H}\right]-2\int_0^{v'}[(v-s)(v'-s)]^{H-\frac{1}{2}}{\rm d}s.
\end{align*}
Now, since
$$[(v-s)(v'-s)]^{H-\frac{1}{2}}\geqslant\left\{\begin{array}{lll}
(v-s)^{2H-1} & \text{if} & H<1/2\\
(v'-s)^{2H-1} & \text{if} & H>1/2
\end{array}\right.$$

we get after some computations
\begin{align*}
\int_0^{v'}\left[(v-s)^{H-\frac{1}{2}}-(v'-s)^{H-\frac{1}{2}}\right]^2{\rm d}s
\leqslant\frac{1}{2H}\left\{\begin{array}{lll}
v'^{~2H}-v^{2H}+(v-v')^{2H} & \text{if} & H<1/2\\
v^{2H}-v'^{~2H}-(v-v')^{2H} & \text{if} & H>1/2
\end{array}\right..
\end{align*}
Moreover, when $H>1/2$, for all $0\leqslant v'<v\leqslant2$,
\begin{align*}
v'^{~2H}-v^{2H}+(v-v')^{2H}&=(v^H-v'^{~H})(v^H+v'^{~H})-(v-v')^{2H}\\
&=(v-v')^H\left(v^H+v'^{~H}-(v-v')^H\right)\\
&\leqslant C_H(v-v')^H
\end{align*}
and when $H<1/2$, $~v'^{~2H}-v^{2H}<0$. So finally, we have the desired result.
\end{proof}
We can now move on the second term in \eqref{eq:moment_2_increments_G}, namely $I_2(v,v')$.
By Theorem 3.2 in \cite{decreusefond1999stochastic}, we have the following upper bound
\begin{align*}
I_2(v,v')\leqslant c_H^2\int_{1\wedge v'}^{1\wedge v}(s+k-1)^{-2\left|H-\frac{1}{2}\right|}(v-s)^{-2\left(\frac{1}{2}-H\right)_+}{\rm d}s
\end{align*}
where $x_+=\max(x,0)$. Then, since $~\sup\limits_{k>1}\sup\limits_{s\in[0,2]}(s+k-1)^{-2\left|H-\frac{1}{2}\right|}<+\infty$, we have 
\begin{align}\label{eq:I_2_v_v'}
I_2(v,v')&\leqslant C_H\int_{1\wedge v'}^{1\wedge v}(v-s)^{-2\left(\frac{1}{2}-H\right)_+}{\rm d}s\nonumber\\
&=C_H(1\wedge v-1\wedge v')^{2\left(H\wedge\frac{1}{2}\right)}\nonumber\\
&\leqslant C_H(v-v')^{2\left(H\wedge\frac{1}{2}\right)}\nonumber\\
&\leqslant \tilde{C}_H\left\{\begin{array}{lll}
(v-v')^{2H} & \text{if} & H<1/2\\
(v-v')^H & \text{if} & H>1/2
\end{array}\right..
\end{align}
By using \eqref{eq:I_1_v_v'} and \eqref{eq:I_2_v_v'} in \eqref{eq:moment_2_increments_G}, we end the proof of Proposition \ref{prop:moment_2_increments_G} for $k>1$.\\

$\rhd$ \underline{Second case: $k=1$}\\

Let us divide this part of the proof into three new cases:\\
First, consider $0\leqslant v'<v\leqslant1$, then $G^{(1)}$ coincides in law with the fractional Brownian motion:
\begin{align*}
\E\left[|G_v^{(1)}-G_{v'}^{(1)}|^2\right]=(v-v')^{2H}.
\end{align*}
Secondly, for $1\leqslant v'<v\leqslant2$, by \eqref{eq:moment_2_increments_G}:
\begin{align*}
\E\left[|G_v^{(1)}-G_{v'}^{(1)}|^2\right]&=\int_0^{1}\left(\int_{v'}^v\frac{\partial}{\partial u}K^{}_H(u+k-1,s+k-1){\rm d}u\right)^2{\rm d}s \\&\leqslant \int_0^{v'}\left(\int_{v'}^v\frac{\partial}{\partial u}K^{}_H(u+k-1,s+k-1){\rm d}u\right)^2{\rm d}s + \int_{v'}^{ v}K^{}_H(v+k-1,s+k-1)^2{\rm d}s\\&\quad\quad=(v-v')^{2H}.
\end{align*}
Finally, if $0\leqslant v'<1\leqslant v\leqslant2$, we get the following by using the two previous cases:
\begin{align*}
\E\left[|G_v^{(1)}-G_{v'}^{(1)}|^2\right]&\leqslant2\E\left[|G_v^{(1)}-G_{1}^{(1)}|^2\right]+2\E\left[|G_1^{(1)}-G_{v'}^{(1)}|^2\right]\\
&\leqslant2\left((v-1)^{2H}+(1-v)^{2H}\right)\\
&\leqslant 4(v-v')^{2H}.
\end{align*}
This inequality concludes the proof of Proposition \ref{prop:moment_2_increments_G} for $k=1$.
\end{proof}
We can now state the result of uniform Sub-Gaussianity:
\begin{propo}\label{prop:uniform_sub_gaussian_G}
There exist $\eta,\eta'>0$ such that for all $k\in\N^*$,
\begin{equation}\label{prop:condition1_sub_gaussian}
\forall x\geqslant0,\quad \PP\left(\|G^{(k)}\|_{\alpha'_H,[0,2]}>x\right)\leqslant \eta'e^{-\eta x^2}
\end{equation}
with $0<\alpha'_H<\alpha_H$ and $\alpha_H$ is defined in Proposition \ref{prop:moment_2_increments_G}.\\
Consequently, for all $k\in\N^*$ and for all $p\geqslant2$,
\begin{equation}\label{prop:condition2_sub_gaussian}
\E\left[\|G^{(k)}\|_{\alpha'_H,[0,2]}^p\right]\leqslant\frac{\eta'}{2}\left(\frac{1}{\eta}\right)^{p/2}p\Gamma\left(\frac{p}{2}\right)
\end{equation}
where $\Gamma(x):=\int_0^{+\infty}e^{-u}u^{x-1}{\rm d}u$.
\end{propo}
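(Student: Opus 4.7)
The plan is to derive Proposition~\ref{prop:uniform_sub_gaussian_G} by combining two classical Gaussian-process ingredients, controlled uniformly in $k$ thanks to Proposition~\ref{prop:moment_2_increments_G}: a Garsia--Rodemich--Rumsey (GRR) estimate to bound the Hölder norm in expectation, and the Borell--Tsirelson--Ibragimov--Sudakov (Borell--TIS) inequality to pass from a bound on the mean to sub-Gaussian tails.

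\emph{Step 1: moment bounds on increments.} Since $G^{(k)}$ is a centered Gaussian process (as a Wiener integral of a deterministic kernel), Gaussian moment equivalence together with Proposition~\ref{prop:moment_2_increments_G} yields, for every $p\geq 2$ and all $0\leq v'<v\leq 2$,
$$\E\bigl[|G^{(k)}_v - G^{(k)}_{v'}|^p\bigr]^{1/p} \leq c_p\sqrt{C_H}\,|v-v'|^{\alpha_H},$$
where $c_p$ depends only on $p$ and $C_H$ comes from Proposition~\ref{prop:moment_2_increments_G}; in particular neither constant depends on $k$.

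\emph{Step 2: expected Hölder norm.} Fix $\alpha'_H\in(0,\alpha_H)$ and choose $p$ large enough that $\alpha_H - 2/p > \alpha'_H$. Applying the GRR lemma (with $\Psi(x)=|x|^p$ and a weight $\rho(u)=u^{\alpha'_H+2/p}$) together with the moment bound of Step~1 gives
$$\E\!\left[\|G^{(k)}\|_{\alpha'_H,[0,2]}^p\right] \leq K_{H,p},$$
for a constant $K_{H,p}$ independent of $k$. In particular, $\E\!\left[\|G^{(k)}\|_{\alpha'_H,[0,2]}\right] \leq M_H$ uniformly in $k$.

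\emph{Step 3: sub-Gaussian concentration via Borell--TIS.} The seminorm $\|G^{(k)}\|_{\alpha'_H,[0,2]}$ is (by continuity of sample paths) the supremum of the countable family of centered Gaussian linear functionals $(G^{(k)}_v-G^{(k)}_{v'})/|v-v'|^{\alpha'_H}$ indexed by rational $v\neq v'$ in $[0,2]$. The Borell--TIS inequality then gives, for every $x\geq 0$,
$$\PP\!\left(\|G^{(k)}\|_{\alpha'_H,[0,2]} > M_H + x\right) \leq \exp\!\left(-\frac{x^2}{2\sigma_k^2}\right),$$
with variance proxy
$$\sigma_k^2 := \sup_{0\leq v'<v\leq 2} \frac{\E\bigl[|G^{(k)}_v-G^{(k)}_{v'}|^2\bigr]}{|v-v'|^{2\alpha'_H}} \leq C_H \cdot 2^{2(\alpha_H-\alpha'_H)},$$
where the last inequality is Proposition~\ref{prop:moment_2_increments_G}, and is in particular uniform in $k$. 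Absorbing the $M_H$ shift into the constants $\eta,\eta'$ in standard fashion yields \eqref{prop:condition1_sub_gaussian}. Finally, \eqref{prop:condition2_sub_gaussian} follows by integrating this tail bound via $\E[X^p] = \int_0^{+\infty} p x^{p-1}\PP(X>x)\,\mathrm{d}x$, exactly as in Proposition~\ref{prop:sub_gaussian_sup_BM}.

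The main obstacle is securing uniformity in $k$ throughout the argument. This is resolved because the constant in Proposition~\ref{prop:moment_2_increments_G} does not depend on $k$, which makes both the GRR moment bound of Step~2 and the Borell--TIS variance proxy of Step~3 uniform; verifying that the Hölder seminorm is a Borel functional of a countable family of Gaussian functionals (so Borell--TIS actually applies) is routine.
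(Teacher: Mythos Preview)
Your proof is correct and follows the same two-stage scheme as the paper---first control the H\"older seminorm from the increment bound of Proposition~\ref{prop:moment_2_increments_G}, then upgrade to sub-Gaussian tails by Gaussian concentration---but you invoke different named tools. The paper goes from the uniform moment estimate $\sup_{v\neq v'}\E[|G^{(k)}_v-G^{(k)}_{v'}|^p]^{1/p}/|v-v'|^{\alpha_H}\leqslant \tilde C_H\sqrt{p}$ directly to exponential integrability of $\|G^{(k)}\|_{\alpha'_H,[0,2]}$ by citing Theorem~A.19 of Friz--Victoir, and then to the tail bound~\eqref{prop:condition1_sub_gaussian} via their Lemma~A.17 (equivalence of Gaussian integrability and sub-Gaussian tails). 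You instead unpack this black box: GRR gives a uniform bound on $\E\|G^{(k)}\|_{\alpha'_H,[0,2]}$, and Borell--TIS, applied to the H\"older seminorm viewed as a supremum of centered Gaussian linear functionals, supplies the tail. Your route is more self-contained and makes the uniformity in $k$ explicit at each step (the GRR constant depends only on $C_H$ and $p$, the Borell--TIS variance proxy only on $C_H$ and $\alpha_H-\alpha'_H$); the paper's route is shorter because the cited results already package these ingredients. The derivation of~\eqref{prop:condition2_sub_gaussian} by tail integration is identical in both.
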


\begin{proof}
Let us first note that since $G^{(k)}$ is a centered Gaussian process (for all $k\in\N^*$), there exists $C>0$ such that for all $p\geqslant1$ and for all $0\leqslant v'<v\leqslant2$:
\begin{align*}
\E\left[\left|G^{(k)}_v-G^{(k)}_{v'}\right|^p\right]^{1/p}\leqslant C\sqrt{p}~\E\left[\left|G^{(k)}_v-G^{(k)}_{v'}\right|^2\right]^{1/2}.
\end{align*}
Then, we obtain through Proposition \ref{prop:moment_2_increments_G},
\begin{align}\label{eq:condition1_sub_gaussian}
\forall p\geqslant1,\quad\sup\limits_{0\leqslant v'<v\leqslant2}\frac{\E\left[\left|G^{(k)}_v-G^{(k)}_{v'}\right|^p\right]^{1/p}}{|v-v'|^{\alpha_H}}\leqslant \tilde{C}_H\sqrt{p}.
\end{align}
Now by Theorem A.19 in \cite{friz2010multidimensional}, \eqref{eq:condition1_sub_gaussian} implies that for all $0<\alpha'_H<\alpha_H$, there exists $\eta_1>0$ such that
\begin{align*}
\E\left[\exp\left(\eta_1\|G^{(k)}\|_{\alpha'_H,[0,2]}^2\right)\right]<+\infty
\end{align*}
and by Lemma A.17 in \cite{friz2010multidimensional} (characterization of Gaussian integrability), this condition is equivalent to the existence of $\eta,\eta'>0$ (depending only on $\eta_1$) such that \eqref{prop:condition1_sub_gaussian} is true. \\
Then, \eqref{prop:condition2_sub_gaussian} follows from \eqref{prop:condition1_sub_gaussian} by using the formula $\E[X]=\int_0^{+\infty}\PP(X>x){\rm d}x$ for non-negative random variables and a simple change of variable.
\end{proof}

\section*{Acknowledgements}

I gratefully acknowledge my PhD advisors Fabien Panloup and Laure Coutin for suggesting the problem and for their valuable comments. 

\bibliographystyle{plain}
\bibliography{biblio_article}

\end{document}